\newcommand{\ds}{\mathrm ds}
\newcommand{\dx}{\mathrm dx}
\newcommand{\dt}{\mathrm dt}
\newcommand{\dtheta}{\mathrm d\theta}
\begin{document}

\title{Global-in-time $H^1$-stability of L2-1$_\sigma$ method on general nonuniform meshes for subdiffusion equation\thanks{C. Quan is supported by NSFC Grant 11901281, the Stable Support Plan Program of Shenzhen Natural Science Fund (Program Contract No. 20200925160747003), and Shenzhen Science and Technology Program (Grant No. RCYX20210609104358076).
}}

\titlerunning{Global-in-time $H^1$-stability of L2-1$_\sigma$ method}        

\author{Chaoyu Quan         \and
        Xu Wu 
}


\institute{Chaoyu Quan \at
              SUSTech International Center for Mathematics, Southern University of Science and Technology, Shenzhen, China. \\
              \email{quancy@sustech.edu.cn}         
           \and
           Xu Wu \at
           Department of Mathematics,  Harbin Institute of Technology, Harbin 150001, China; Department of Mathematics, Southern University of Science and Technology, Shenzhen, China. \\
           \email{11849596@mail.sustech.edu.cn}
}

\date{Received: date / Accepted: date}

\maketitle

\begin{abstract}
In this work  the L2-1$_\sigma$ method on general nonuniform meshes is studied for the subdiffusion equation.
When the time step ratio is no less than $0.475329$, a  bilinear form associated with the L2-1$_\sigma$ fractional-derivative operator is proved to be positive semidefinite and a new global-in-time $H^1$-stability of L2-1$_\sigma$ schemes is then derived under simple assumptions on the initial condition and the source term.
In addition, the sharp $L^2$-norm convergence is proved under the constraint that the time step ratio is no less than $0.475329$. 
\keywords{Subdiffusion equation \and L2-1$_\sigma$ method \and Nonuniform meshes  \and $H^1$-stability \and Convergence}
\subclass{35R11 \and 65M12 }
\end{abstract}

\sloppy
\section{Introduction}
In the past decade, many numerical methods have been proposed to solve the time-fractional diffusion equations~\cite{metzler2000random,gorenflo2002time}. 
If the solution is sufficiently smooth (which requires the initial value to be smooth
and satisfying some compatibility conditions), the  L1 scheme has $(2-\alpha)$ order accuracy, see the works of Langlands and Henry~\cite{langlands2005accuracy}, Sun-Wu~\cite{sun2006fully}, and Lin-Xu~\cite{lin2007finite}.
Alikhanov proposes the L2-1$_\sigma$ scheme having second order accuracy in time ~\cite{alikhanov2015new}. Gao-Sun-Zhang study an L2 method of $(3-\alpha)$-order on uniform meshes in~\cite{gao2014new} and later a slightly different L2 method is analyzed by Lv-Xu in~\cite{lv2016error}.  In addition to the Lagrangian interpolation methods, the discontinous Galerkin methods is analyzed by  Mustapha-Abdallah-Furati ~\cite{mustapha2014discontinuous} and the convolution quadrature (CQ) scheme is studied by  Jin-Li-Zhou~\cite{jin2017correction}, both of which can obtain the desired high-order accuracy.

However, simple examples show that for given smooth data, the solutions to time-fractional problems typically have weak singularities. 
Some works start to focus on the numerical solution of more typical fractional problems whose solutions exhibit weak singularities.
In particular, the L1, L2-1$_\sigma$, and L2 methods on the graded meshes have been developed. 
Stynes-Riordan-Gracia~\cite{stynes2017error} prove the sharp error analysis of L1 scheme on graded meshes. 
Kopteva provides a different analysis framework of the L1 scheme on graded meshes in two and three spatial dimensions in ~\cite{kopteva2019error}.
Chen-Stynes~\cite{chen2019error} prove the second-order convergence of the L2-1$_\sigma$ scheme on fitted meshes combining the graded meshes and quasiuniform meshes.
Kopteva-Meng~\cite{kopteva2020error}  provide sharp pointwise-in-time error bounds for quasi-graded termporal meshes with arbitrary degree of grading for the L1 and L2-1$_\sigma$ schemes. 
Later Kopteva generalize this sharp pointwise error analysis to an L2-type scheme on quasi-graded meshes~\cite{kopteva2021error}. 
Liao-Li-Zhang establish the sharp error analysis for the L1 scheme of subdiffusion equation on general nonuniform meshes in~\cite{liao2018sharp} and then Liao-Mclean-Zhang study the L2-1$_\sigma$ scheme in~\cite{liao2019discrete,liao2018second}, where a discrete Gr{\`o}nwall inequality is introduced.
This analysis for general nonuniform meshes can be used to design adaptive strategies of time steps.

Taking into account the singularity of exact solution, Mustapha-Abdallah-Furati~\cite{mustapha2014discontinuous} analyze the global high-order convergence of the discontinuous Galerkin method for subdiffusion equation on graded mesh.
The CQ methods provides a flexible framework for constructing high-order methods to approximate the fractional
derivative, developed by Lubich in~\cite{lubich1986discretized,lubich1988convolution,lubich2004convolution}.  
Along this way, Lubich-Sloan-Thom{\'e}e~\cite{lubich1996nonsmooth} analyze first and second order CQ schemes for  subdiffusion equation. 
In recent years, Jin-Li-Zhou~\cite{jin2017correction,jin2020subdiffusion}  combine  BDF (backward differentiation formula) CQ methods with corrections to achieve higher (more than two) order convergence  which can also overcome the weak singularity problem for time-fractional diffusion equation. 
Banjai and L{\'o}pez-Fern{\'a}ndez~\cite{banjai2019efficient} provide  an arbitrarily high-order accuracy algorithm for subdiffusion equation based on Runge-Kutta CQ. 
In addition, the CQ methods have also been developed  to solve nonlinear subdiffusion equations (see~\cite{jin2018numerical,al2019numerical,wang2020high,li2022exponential}).

In this work, we first study the $H^1$-stability of the L2-1$_\sigma$ method proposed initially in~\cite{alikhanov2015new} on general nonuniform meshes for subdiffusion equation with homogeneous Dirichlet boundary condition:
\begin{equation*}
    \partial_t^\alpha u(t,x) =\Delta u(t,x)+f(t,x),\quad (t,x)\in (0,\infty)\times\Omega,  
\end{equation*}
where $\Omega$ is a bounded Lipschitz domain in $\mathbb R^d$. 
For the L2-1$_\sigma$ fractional-derivative operator denoted by $L_k^{\alpha,*}$, we prove that the following bilinear form
\begin{equation}\label{eq:Bn}
    \mathcal B_n(v,w) = \sum_{k=1}^{n}\langle L_k^{\alpha,*} v, \delta_k w\rangle,\quad \delta_k w \coloneqq w^k-w^{k-1},~n\geq 1,
\end{equation}
is positive semidefinite under the restrictions \eqref{thm1cond} on time step ratios $\rho_k \coloneqq \tau_k/\tau_{k-1}$ with $\tau_k$ the $k$th time step and $k\geq2$.
In fact, the positive semidefiniteness of $\mathcal B_n$ on general nonuniform meshes is an open problem as stated in the conclusion of~\cite{liao2020second}, where the maximum principle and convergence analysis are provided for L2-1$_\sigma$ scheme of the time-fractional Allen--Cahn equation but not the positive definiteness of L2-1$_\sigma$ operator.
On the positive definiteness, Karaa presents in~\cite{karaa2021positivity,al2022time} a general criteria ensuring the positivity of quadratic forms that can be applied to the time-fractional operators such as the L1 formula.
In~\cite{liao2021energy}, Liao-Tang-Zhou proves the positive definiteness of a new L1-type operator. 

Based on the positive semidefiniteness of $\mathcal B_n$ associated with L2-1$_\sigma$ operator, we propose a new \emph{global-in-time} $H^1$-stability result in Theorem~\ref{thm2} for the L2-1$_\sigma$ scheme. In particular, when $\rho_k\ge 0.475329$ for $k\geq 2$, the restrictions \eqref{thm1cond} hold and the $H^1$-stability  can be ensured for all time. 
 
Besides the global-in-time $H^1$-stability of the L2-1$_\sigma$ scheme in Theorem~\ref{thm2}, we revisit the sharp convergence analysis in~\cite{liao2018second}  by Liao-Mclean-Zhang.
We provide a proof of sharp $L^2$-norm convergence based on new properties of the L2-1$_\sigma$ coefficients, where the restriction on time step ratios is relaxed from $\rho_k\geq 4/7$ in~\cite{liao2018second} to $\rho_k\geq 0.475329$.

In the numerical implementations, we compare the L2-1$_\sigma$ schemes on the standard graded meshes~\cite{stynes2017error} and the $r$-variable graded meshes (with varying grading parameter) proposed in~\cite{quan2022h}.
According to our stability analysis, these methods are all $H^1$-stable. 
In our example, it can be observed that choosing proper $r$-variable graded meshes can lead to better numerical performance.

This work is organized as follows. 
In Section~\ref{sect2},  the derivation, explicit expression and  reformulation of L2-1$_\sigma$ fractional-derivative operator are provided. 
In Section~\ref{sect3}, we prove the positive semidefiniteness of the bilinear form $\mathcal B_n$ under some mild restrictions on the time step ratios. 
In Section~\ref{sect4}, we establish a new global-in-time $H^1$-stability of the L2-1$_\sigma$ scheme for the subdiffusion equation, based on the positive semidefiniteness result. Moreover we show the global error estimate when $\rho_k\geq 0.475329$ under low regularity assumptions on the exact solution.
In Section~\ref{sect5}, we do some first numerical tests.

\section{Discrete fractional-derivative operator}\label{sect2}
In this part we show the derivation, explicit expression and reformulation  of L2-1$_\sigma$ operator on an arbitrary nonuniform mesh.

We consider the L2-1$_\sigma$ approximation of the fractional-derivative operator defined by
\begin{equation*}
    \partial_t^\alpha u = \frac{1}{\Gamma(1-\alpha)} \int_0^t \frac{u'(s)}{(t-s)^\alpha} \, \ds. 
\end{equation*}
Take a nonuniform time mesh $0 = t_0<t_1<\ldots<t_{k-1}<t_k<\ldots$ with $k\geq 1$. 
Let $\tau_j = t_j-t_{j-1}$ and $\sigma=1-\alpha/2$ (c.f.~\cite{alikhanov2015new} for this setting of $\sigma$). 
 The fractional derivative $\partial_t^\alpha u(t)$ at $t = t_k^*\coloneqq t_{k-1}+\sigma\tau_k$ could be approximated by the following L2-1$_\sigma$ fractional-derivative operator
\begin{align}\label{eq:Lk0}
    L_k^{\alpha,*} u 
    & = \frac{1}{\Gamma(1-\alpha)} \left(
    \sum_{j=1}^{k-1}\int_{t_{j-1}}^{t_j} \frac{\partial_s H_2^j(s)}{(t_k^*-s)^\alpha}\,\ds + \int_{t_{k-1}}^{t_k^*} \frac{\partial_s H_1^{k}(s)}{(t_k^*-s)^\alpha}\,\ds \right) \\\nonumber
    & = \frac{1}{\Gamma(1-\alpha)}\left( \sum_{j=1}^{k-1} (a_{j}^k u^{j-1} + b_{j}^k u^j + c_{j}^k u^{j+1} ) 
    \right)+\frac{\sigma^{1-\alpha}(u^k-u^{k-1})}{\Gamma(2-\alpha)\tau_k^{\alpha}}, \nonumber
\end{align}   
where for $1\leq j\leq k-1$,
\begin{align*}
H_2^j (t) =& \frac{(t-t_j)(t-t_{j+1})}{(t_{j-1}-t_j)(t_{j-1}-t_{j+1})} u^{j-1} + \frac{(t-t_{j-1})(t-t_{j+1})}{(t_{j}-t_{j-1})(t_{j}-t_{j+1})} u^{j} \\ 
& + \frac{(t-t_{j-1})(t-t_{j})}{(t_{j+1}-t_{j-1})(t_{j+1}-t_{j})} u^{j+1},\\
H_1^k (t) =& \frac{t-t_k}{t_{k-1}-t_k} u^{k-1} + \frac{t-t_{k-1}}{t_{k}-t_{k-1}} u^k,
\end{align*}
and 
\begin{equation}
\begin{aligned}\label{eq:aj}
   a_{j}^k & =  \int_{t_{j-1}}^{t_j} \frac{2s -t_j-t_{j+1}}{\tau_{j}(\tau_{j}+\tau_{j+1})} \frac{1}{(t_k^*-s)^\alpha}\,\ds 
   = \int_0^1 \frac{-2 \tau_j(1-\theta)-\tau_{j+1}}{(\tau_{j}+\tau_{j+1})(t_k^*-(t_{j-1}+\theta \tau_j))^\alpha}\,\dtheta,\\
    b_{j}^k  & = -\int_{t_{j-1}}^{t_j} \frac{2s -t_{j-1}-t_{j+1}}{\tau_{j}\tau_{j+1}} \frac{1}{(t_k^*-s)^\alpha}\,\ds 
   = -\int_0^1 \frac{2 \tau_j\theta-\tau_j-\tau_{j+1}}{\tau_{j+1}(t_k^*-(t_{j-1}+\theta \tau_j))^\alpha}\,\dtheta, \\
    c_{j}^k  & =  \int_{t_{j-1}}^{t_j} \frac{2s -t_{j-1}-t_{j}}{\tau_{j+1}(\tau_{j}+\tau_{j+1})} \frac{1}{(t_k^*-s)^\alpha}\,\ds 
   = \int_0^1 \frac{\tau_j^2(2\theta-1)}{\tau_{j+1}(\tau_{j}+\tau_{j+1})(t_k^*-(t_{j-1}+\theta \tau_j))^\alpha}\,\dtheta.
\end{aligned}
\end{equation}
It can be verified that $a_{j}^k<0$, $b_{j}^k>0$, $c_{j}^k>0$, and $a_{j}^k+b_{j}^k+c_{j}^k =0$ for $1\leq j\leq k-1$.

Specifically speaking, we can figure out the explicit expressions of $a_j^k$ and $c_j^k$ as follows (note that $b_j^k = -a_j^k-c_j^k$): for $1\leq j\leq k-1$,
\begin{align*}
   a^{k}_j
   & = \frac{\tau_{j+1}}{(1-\alpha)\tau_j(\tau_j+\tau_{j+1})}(t_k^*-t_j)^{1-\alpha}-\frac{2\tau_{j}+\tau_{j+1}}{(1-\alpha)\tau_j(\tau_j+\tau_{j+1})}(t_k^*-t_{j-1})^{1-\alpha}\\
   &\quad +\frac{2}{(2-\alpha)(1-\alpha)\tau_j(\tau_j+\tau_{j+1})}\left[(t_k^*-t_{j-1})^{2-\alpha}-(t_k^*-t_j)^{2-\alpha}\right],\\
     c^{k}_j  
   &= \frac{1}{(1-\alpha)\tau_{j+1}(\tau_j+\tau_{j+1})}
   \Big[-\tau_j( (t_k^*-t_{j-1})^{1-\alpha}+ (t_k^*-t_{j})^{1-\alpha})\\
   & \quad+2(2-\alpha)^{-1} ( (t_k^*-t_{j-1})^{2-\alpha}- (t_k^*-t_{j})^{2-\alpha}) \Big].
\end{align*}
We reformulate the discrete fractional derivative $L_k^{\alpha,*}$ in \eqref{eq:Lk0} as
\begin{equation}\label{eq:Lk1}
\begin{aligned}
L_k^{\alpha,*} u
  &=\frac{1}{\Gamma(1-\alpha)}\left( c_{k-1}^k \delta_k u -a_1^k\delta_1 u  +\sum_{j=2}^{k-1} d^k_{j} \delta_j u \right)+\frac{\sigma^{1-\alpha}}{\Gamma(2-\alpha)\tau_k^{\alpha}}\delta_k u, 
    \end{aligned} 
\end{equation}
where $\delta_j u= u^j-u^{j-1},$ $d^k_j\coloneqq c^k_{j-1}-a^k_{j}.$
Here we make a convention that $a_1^1=0$ and $c_0^1=0$. 

To establish the global-in-time $H^1$-stability of L2-1$_\sigma$ method for fractional-order parabolic problem, we shall prove the positive semidefiniteness of $ \mathcal B_n$ defined in \eqref{eq:Bn}.

\section{Positive semidefiniteness of bilinear form $\mathcal B_n$}\label{sect3}
In this section, 
we first propose some properties of the L2-1$_\sigma$ coefficients $a^k_j$, $c^k_j$ and $d^k_j$ in \eqref{eq:Lk1}, which will be useful to establish the positive semidefiniteness of bilinear form $\mathcal B_n$. Then we prove rigorously the positive semidefiniteness of bilinear form $\mathcal B_n$ under some constraints of $\rho_k$, $k\ge 2$.
\begin{lemma}[Properties of $a^k_j$, $c^k_j$ and $d^k_j$]\label{lemmapro}
For the L2-1$_\sigma$ coefficients given in \eqref{eq:Lk1}, given a nonuniform mesh $\{\tau_j\}_{j\geq 1}$,
the following properties hold:
\begin{itemize}
\item[(P1)] $a^k_j<0,~1\leq j\leq k-1,~k\geq 2$;
\item[(P2)] $a^{k+1}_{j}-a^k_{j}>0,~1\leq j\leq k-1,~k\geq 2$; 
\item[(P3)] $a^k_{j+1}-a^k_{j}<0,~1\leq j\leq k-2,~k\geq 3$;
\item[(P4)] $a_{j+1}^k-a_{j}^k<a_{j+1}^{k+1}-a_{j}^{k+1},~1\leq j\leq k-2,~k\geq 3$;
\item[(P5)] $c^k_j>0,~1\leq j\leq k-1,~k\geq 2$;
\item[(P6)] $c^{k+1}_{j}-c^k_{j}<0,~1\leq j\leq k-1,~k\geq 2$; 
\item[(P7)] $d^k_j>0,~2\leq j\leq k-1,~k\geq 3$;
\item[(P8)] $d^{k+1}_{j}-d^k_j<0,~2\leq j\leq k-1,~k\geq 3$.
\end{itemize}
Furthermore, if the nonuniform mesh $\{\tau_j\}_{j\ge 1}$,
 with  $\rho_j \coloneqq \tau_j/\tau_{j-1}$ satisfies
\begin{equation}\label{condition:rho}
    \frac{1}{\rho_{j+1}}\ge\frac{1}{\rho_{j}^2(1+\rho_{j})}-3,\quad \forall j\geq 2,
\end{equation}
then the following properties of $d_j^k$ hold:
\begin{itemize}
\item[(P9)] $d^k_{j+1}-d^k_j>0,~2\leq j\leq k-2,~k\geq 4$;
\item[(P10)] $d_{j+1}^k-d_{j}^k>d_{j+1}^{k+1}-d_{j}^{k+1},~2\leq j\leq k-2,~k\geq 4$.
\end{itemize}

\end{lemma}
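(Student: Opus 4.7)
The plan is to prove the ten properties in four stages of increasing difficulty, reusing earlier conclusions wherever possible.

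\textbf{Stage 1 (sign properties P1, P5, P7).} Property (P1) is immediate from the second representation of $a_j^k$ in~\eqref{eq:aj}: the weight $-2\tau_j(1-\theta) - \tau_{j+1}$ is uniformly negative on $[0,1]$, while the kernel $(t_k^* - t_{j-1} - \theta\tau_j)^{-\alpha}$ is positive. For (P5) I would split the $\theta$-integral defining $c_j^k$ at $\theta = 1/2$ and apply the substitution $\theta \mapsto 1 - \theta$ on $[0, 1/2]$; strict monotonicity of the kernel in $\theta$ forces the positive half (where $2\theta - 1 > 0$) to strictly outweigh the negative half. With (P1) and (P5) in hand, (P7) is the algebraic observation that $d_j^k = c_{j-1}^k + (-a_j^k)$ is a sum of two positive numbers.

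\textbf{Stage 2 (monotonicity in $k$: P2, P6, P8).} Because $t_{k+1}^* > t_k^*$, we have the pointwise inequality $(t_{k+1}^* - s)^{-\alpha} < (t_k^* - s)^{-\alpha}$; integrated against the negative weight in $a_j^k$ this gives (P2) at once. For (P6) I set $h(\theta) := (t_k^* - t_{j-1} - \theta\tau_j)^{-\alpha} - (t_{k+1}^* - t_{j-1} - \theta\tau_j)^{-\alpha}$. One checks from $h'(\theta)$ that $h$ is positive and strictly increasing in $\theta$, so the symmetrisation trick from Stage 1 yields $\int_0^1 (2\theta-1)\,h(\theta)\,\dtheta > 0$, i.e.\ $c_j^k > c_j^{k+1}$. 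Then (P8) follows by writing $d_j^{k+1} - d_j^k = (c_{j-1}^{k+1} - c_{j-1}^k) - (a_j^{k+1} - a_j^k)$ and noting that both summands are negative.

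\textbf{Stage 3 (comparisons in $j$ for $a$: P3, P4).} I would work from the closed form of $a_j^k$ displayed in the paper. Factoring out $[\tau_j(\tau_j+\tau_{j+1})]^{-1}$, the quantity $\tau_j(\tau_j+\tau_{j+1})\,a_j^k$ becomes a linear combination of $(t_k^*-t_{j-1})^{2-\alpha}$, $(t_k^*-t_j)^{2-\alpha}$, $(t_k^*-t_{j-1})^{1-\alpha}$ and $(t_k^*-t_j)^{1-\alpha}$, so both the $j$-monotonicity (P3) and the cross-monotonicity (P4) reduce to Taylor-type sign checks for $x \mapsto x^{2-\alpha}$ on $(0,\infty)$, a function whose relevant derivatives have definite signs. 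The bookkeeping is heavy because $\tau_j, \tau_{j+1}, \tau_{j+2}$ may all differ, but no extra hypothesis is needed.

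\textbf{Stage 4 (conditional properties P9, P10).} Writing $d_{j+1}^k - d_j^k = (c_j^k - c_{j-1}^k) - (a_{j+1}^k - a_j^k)$ and substituting the closed forms produces a finite combination of $(t_k^* - t_m)^{1-\alpha}$ and $(t_k^* - t_m)^{2-\alpha}$ for $m \in \{j-1, j, j+1\}$, with coefficients rational in $\tau_{j-1}, \tau_j, \tau_{j+1}, \tau_{j+2}$. I expect the worst case (tightest in the step ratios) to reduce to a single rational inequality in $\rho_j, \rho_{j+1}$ whose solution set is exactly~\eqref{condition:rho}; this would explain the peculiar shape of the hypothesis. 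For (P10) I would additionally invoke (P4) together with an analogous $k$-cross-monotonicity for $c_j^k - c_{j-1}^k$ (provable by the Stage 2 method) to reduce (P10) to an estimate of the same shape as (P9). This stage is the main obstacle: the algebraic manipulations are long, and~\eqref{condition:rho} has to be used with almost no slack if the sharp threshold is to be preserved.
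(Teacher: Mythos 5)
Your Stages 1 and 2 are correct and complete as stated: the sign of the weight in \eqref{eq:aj} gives (P1), the symmetrisation about $\theta=1/2$ against the monotone kernel gives (P5) and (P6), and (P2), (P7), (P8) follow by the algebraic recombinations you indicate. Note, for comparison, that the paper itself does not prove the lemma in-line (it cites the proof of Lemma~3.1 of the authors' earlier work, with $t_k$ replaced by $t_k^*$), but the machinery it actually relies on elsewhere — the integrated-by-parts representations \eqref{eq:akj_rem}, \eqref{eq:akj1_rem}, \eqref{eq:ckj_rem}, the identity $\int_0^1(1-3s)(1-s)\,\mathrm{d}s=0$, and convexity of $t^{-1-\alpha}$ — is what makes the harder properties tractable; e.g.\ (P3) is immediate from using \eqref{eq:akj1_rem} for $a_{j+1}^k$ and \eqref{eq:akj_rem} for $a_j^k$, since both are then compared to $-(t_k^*-t_j)^{-\alpha}$ with remainders of opposite sign. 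Your Stage 3 via raw closed forms is plausible but only a plan, and the "heavy bookkeeping" is precisely what those representations are designed to avoid.

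The genuine gap is in Stage 4. For (P10) you propose to peel off the $a$-part via (P4) and handle the remaining $c$-part, namely $(c_j^k-c_j^{k+1})-(c_{j-1}^k-c_{j-1}^{k+1})$, "by the Stage 2 method". That step fails: by \eqref{eq:ckj_rem} these two quantities carry the different prefactors $\tau_j^3/[\tau_{j+1}(\tau_j+\tau_{j+1})]$ and $\tau_{j-1}^3/[\tau_j(\tau_{j-1}+\tau_j)]$, whose comparison amounts to $\rho_{j+1}(1+\rho_{j+1})\le\rho_j^2(1+\rho_j)$ — a condition strictly stronger than \eqref{condition:rho} and violated, e.g., for $\rho_j=1$ and $\rho_{j+1}$ large, which \eqref{condition:rho} permits. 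So the $c$-difference is not sign-definite under the lemma's hypothesis and cannot be disposed of separately by a pointwise kernel argument; the correct argument must keep the $a$- and $c$-contributions together, extract from the $a$-term (via $\int_0^1(1-3s)(1-s)\,\mathrm{d}s=0$ and monotonicity of the kernel) the additional coefficient $\tau_j(4\tau_j+3\tau_{j+1})/(\tau_j+\tau_{j+1})$, and then the combined three-term coefficient inequality is exactly the rational inequality $1/\rho_{j+1}+3\ge 1/[\rho_j^2(1+\rho_j)]$, i.e.\ \eqref{condition:rho} — this is visible in the paper's own computation \eqref{eq:3.80-1} and \eqref{ineq:rhon}. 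The same issue affects your (P9) sketch: the feasible set does not come out "exactly" as \eqref{condition:rho} (the derivation uses one-sided estimates, so \eqref{condition:rho} is sufficient, not an exact characterisation), and a direct sign check on the closed forms cannot succeed without quantitatively invoking the step-ratio condition in the combined form above. As written, Stages 3–4 therefore do not constitute a proof, and the specific reduction proposed for (P10) is unsound.
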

\begin{proof}
The proof is the same as the proof of \cite[Lemma 3.1]{quan2022h} except replacing $t_k$ with $t_k^*$. We omit it here.
\end{proof}

\begin{theorem}\label{thm1}
Consider a nonuniform mesh $\{\tau_k\}_{k\geq 1}$ satisfying that $k\ge2$,
\begin{equation}\label{thm1cond}
 \left\{\begin{aligned}
     &\rho_*<\rho_{k+1} \leq  \frac{\rho_k^2(1+\rho_k)}{1-3\rho_k^2(1+\rho_k)},&&  \rho_*<\rho_k< \eta,\\
      &\rho_*<\rho_{k+1},&& \eta\le \rho_k,
\end{aligned}
\right.
\end{equation}
where $ \rho_*\approx 0.356341 $, and $\eta\approx 0.475329$.
Then the  for any function $u$ defined on $[0,\infty)\times\Omega$ and $n\geq 1$,
\begin{equation}\label{eq:PD}
    \mathcal B_n(u,u) = \sum_{k=1}^{n}\langle L_k^{\alpha,*} u, \delta_k u\rangle\geq \sum_{k=1}^{n} \frac{g_k(\alpha)}{2\Gamma(2-\alpha)} \|\delta_k u\|^2_{L^2(\Omega)}\geq 0,
\end{equation}
where 
\begin{equation} \label{eq:gk}
 g_k(\alpha)= \\
\left\{
\begin{aligned}
&  \frac{1}{(\sigma\tau_1)^\alpha}\left(2\sigma-\frac{1-\alpha}{\rho_2^\alpha}\right), &&k=1,\\
      & (1-\alpha)c_{k-1}^k\\
      & \quad +\frac{1}{(\sigma\tau_k)^{\alpha}}\bigg(1-\frac{\alpha(1-\alpha)}{(1+\rho_{k+1})\rho_{k+1}^\alpha}
     \int_0^{1}\frac{s(\rho_{k+1}+s)}{\sigma\rho_{k+1}+s} \,\ds\bigg),  &&2\le k\le n-1,\\
    &(1-\alpha)c_{n-1}^n+ \frac{1}{(\sigma\tau_n)^{\alpha}},&&k=n\neq 2,
\end{aligned}
\right.
\end{equation}
are always positive for $\alpha \in (0,1)$ and $\sigma = 1-\alpha/2$.
\end{theorem}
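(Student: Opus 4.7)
The plan is to expand $\mathcal{B}_n(u,u)$ via the reformulation \eqref{eq:Lk1}, extracting the diagonal contributions $c_{k-1}^k\|\delta_k u\|^2+\frac{\sigma^{1-\alpha}}{(1-\alpha)\tau_k^\alpha}\|\delta_k u\|^2$, and then controlling each cross term $-a_1^k\langle\delta_1 u,\delta_k u\rangle$ and $d_j^k\langle\delta_j u,\delta_k u\rangle$ ($2\le j\le k-1$) by a weighted Young inequality $|\langle\delta_j u,\delta_k u\rangle|\le \tfrac{1}{2}(w_{j,k}\|\delta_j u\|^2+w_{j,k}^{-1}\|\delta_k u\|^2)$ with positive weights $w_{j,k}$ to be selected. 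After this redistribution, the coefficient of $\|\delta_k u\|^2$ on the right-hand side consists of the diagonal, minus the ``incoming'' penalty $\tfrac{1}{2}\bigl(\sum_{j<k} d_j^k/w_{j,k}+|a_1^k|/w_{1,k}\bigr)$ from row $k$, minus the ``outgoing'' penalty $\tfrac{1}{2}\sum_{k'>k}d_k^{k'}w_{k,k'}$ inherited from later rows. The weights should be chosen so that the monotonicity statements (P2)--(P4), (P6), (P8)--(P10) of Lemma~\ref{lemmapro} make these penalty sums telescope.

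The natural choice ties $w_{j,k}$ to the integral representations \eqref{eq:aj} of $a_j^k$ and $c_j^k$, so that the outgoing contribution from row $k+1$ reproduces, after the change of variable $s=1-\theta$, the integral $\int_0^1 \frac{s(\rho_{k+1}+s)}{\sigma\rho_{k+1}+s}\,\ds$ appearing in the middle case of \eqref{eq:gk}. For $k=n$ the outgoing sum is empty, yielding the clean expression $(1-\alpha)c_{n-1}^n+1/(\sigma\tau_n)^\alpha$; for $k=1$ the incoming sum is empty and the outgoing sum collapses using (P2) (which makes $\sum_{k'\ge 2}|a_1^{k'}|w_{1,k'}$ a telescoping series bounded by a single term involving $\rho_2$), giving the boundary expression $(2\sigma-(1-\alpha)/\rho_2^\alpha)/(\sigma\tau_1)^\alpha$.

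Once the identity $\Gamma(2-\alpha)\mathcal{B}_n(u,u)\ge \tfrac{1}{2}\sum_k g_k(\alpha)\|\delta_k u\|^2$ is established at the algebraic level, the remaining task is to show $g_k(\alpha)>0$ for $\alpha\in(0,1)$ under \eqref{thm1cond}. For $k=1$ the inequality $g_1>0$ reduces to $\rho_2^\alpha>(1-\alpha)/(2-\alpha)$, which a direct analysis in $\alpha$ converts into the single constraint $\rho_2>\rho_*$ with $\rho_*$ defined as the root of a transcendental equation, numerically $\rho_*\approx 0.356341$. For $2\le k\le n-1$, since $c_{k-1}^k>0$ by (P5), it suffices to prove positivity of the bracketed factor multiplying $1/(\sigma\tau_k)^\alpha$. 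This last step is the main obstacle: the integral $\int_0^1 \frac{s(\rho+s)}{\sigma\rho+s}\,\ds$ has no elementary closed form in $\alpha$, so one must derive a sharp pointwise bound in $\rho$ uniformly in $\alpha\in(0,1)$. The two regimes in \eqref{thm1cond}---small $\rho_k<\eta$ with an upper bound on $\rho_{k+1}$, versus $\rho_k\ge\eta$ with only a lower bound on $\rho_{k+1}$---reflect the two extremal shapes the integrand can take, and locating the sharp thresholds $\rho_*$ and $\eta$ is exactly what produces the specific numerical values $0.356341$ and $0.475329$ appearing in the statement.
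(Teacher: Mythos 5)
Your plan hinges on absorbing every off-diagonal term $-a_1^k\langle\delta_1u,\delta_ku\rangle$ and $d_j^k\langle\delta_ju,\delta_ku\rangle$ by a pairwise weighted Young inequality, i.e.\ on exhibiting a scaled diagonal-dominance certificate for the symmetrized matrix $\mathbf M+\mathbf M^{\rm T}$ in \eqref{matM}. Such a certificate does not exist for this kernel: the off-diagonal entries of row $k$ are positive and decay only like $(t_k^*-t_j)^{-\alpha}$, so on (say) a uniform mesh the off-diagonal row sum grows like $\tau^{-\alpha}k^{1-\alpha}$ while the diagonal stays of size $\tau^{-\alpha}$; summing your per-row conditions over $k$ then gives a contradiction for any choice of weights $w_{j,k}$, exactly as it does for the matrix $J+I$ (all-ones plus identity), which is positive semidefinite but admits no sum of PSD $2\times2$ pieces. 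Positive semidefiniteness of $\mathcal B_n$ is a cancellation phenomenon, not a dominance phenomenon, and no amount of telescoping of the penalties via (P2)--(P10) can rescue the scheme. The paper's proof is structurally different: it splits $\mathbf M=\mathbf A+\mathbf B$, with the diagonal of $\mathbf A$ given by the specific relations \eqref{betak} ($2\beta_1=-a_1^2$, $2\beta_k-d_k^{k+1}=d_{k-1}^k-d_{k-1}^{k+1}$, $2\beta_n=d_{n-1}^n$), proves $\mathbf A+\mathbf A^{\rm T}\succeq0$ by verifying the monotone/convex row--column structure required by the criterion of \cite[Lemma 2.1]{CSIAM-AM-1-478} (this is precisely where Lemma~\ref{lemmapro} and the condition \eqref{condition:rho} enter), and only then bounds the leftover diagonal $\mathbf B$ from below entrywise, which is what produces $g_k(\alpha)$ through the integral representations \eqref{eq:akj_rem}--\eqref{eq:ckj_rem}.

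Your account of where the numerical thresholds come from is also not what happens. The constant $\rho_*\approx0.356341$ is the root of $\rho(1+\rho)=1-3\rho^2(1+\rho)$ and $\eta\approx0.475329$ is the root of $1-3\rho^2(1+\rho)=0$; both arise from the step-ratio condition \eqref{condition:rho}, $\rho_{j+1}^{-1}\ge\rho_j^{-2}(1+\rho_j)^{-1}-3$, which is needed for (P9)--(P10) and for the nonnegativity of the combination $\frac{\tau_{k-1}^3}{\tau_k(\tau_{k-1}+\tau_k)}-\frac{\tau_{k-2}^3}{\tau_{k-1}(\tau_{k-2}+\tau_{k-1})}+\frac{(4\tau_{k-1}+3\tau_k)\tau_{k-1}}{\tau_{k-1}+\tau_k}$ in Cases 3--4, not from the $k=1$ condition (the requirement $\rho_2^\alpha\ge\frac{1-\alpha}{2-\alpha}$ is far weaker, its supremum over $\alpha\in(0,1)$ being roughly $0.125$) nor from any sharp study of $\int_0^1\frac{s(\rho+s)}{\sigma\rho+s}\,\ds$, which the paper simply bounds by $1$ and controls using $\rho_{k+1}>\rho_*$. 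So both the core positivity mechanism and the provenance of $\rho_*$ and $\eta$ need to be replaced before the argument can be completed.
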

\begin{proof}
According to \eqref{eq:Lk1}, we can rewrite $\mathcal B_n(u,u)$ in the following matrix form
\begin{equation*}
    \mathcal B_n(u,u) = \sum_{k=1}^{n}\langle L_k^{\alpha,*} u, \delta_k u\rangle=\frac{1}{\Gamma(1-\alpha)}\int_{\Omega}\psi \mathbf M \psi^{\mathrm{T}}\dx,
\end{equation*}
where 
$
    \psi=[\delta_{1} u,\delta_2 u,\cdots,\delta_n u],
$
and
\begin{equation}\label{matM}
\mathbf M=
\begin{pmatrix}
\frac{\sigma^{1-\alpha}}{(1-\alpha)\tau_1^{\alpha}} & \\
-a_1^2& c^2_1+\frac{\sigma^{1-\alpha}}{(1-\alpha)\tau_2^{\alpha}} \\
-a_1^3& d_2^3& c^3_2+\frac{\sigma^{1-\alpha}}{(1-\alpha)\tau_3^{\alpha}} \\
\vdots& \vdots &\ddots & \ddots\\
-a_1^n& d_2^{n}&\cdots&  d_{n-1}^n&c_{n-1}^n+\frac{\sigma^{1-\alpha}}{(1-\alpha)\tau_n^{\alpha}} 
\end{pmatrix}.
\end{equation}
We split $\mathbf M$ as $\mathbf M = \mathbf A+\mathbf B$, where
\begin{equation*}
    \mathbf A=
\begin{pmatrix}
\beta_1 & \\
-a_1^2& \beta_2  \\
-a_1^3 & d^3_2& \beta_3 \\
\vdots& \vdots & \ddots & \ddots\\
-a_1^n& d^{n}_{2}&\cdots&  d^n_{n-1}& \beta_n
\end{pmatrix},
\end{equation*}
and
\begin{equation*}
    \mathbf B={\rm diag}\left(\frac{\sigma^{1-\alpha}}{(1-\alpha)\tau_1^{\alpha}}-\beta_1,~c_1^2+\frac{\sigma^{1-\alpha}}{(1-\alpha)\tau_2^{\alpha}}-\beta_2,~\cdots,~c_{n-1}^n+\frac{\sigma^{1-\alpha}}{(1-\alpha)\tau_n^{\alpha}}-\beta_n\right),
\end{equation*}
with
\begin{equation}\label{betak}
\begin{aligned}
    &2\beta_1= -a^{2}_1,\quad
    2\beta_2 -d^{3}_2=a^{3}_1-a^{2}_1,\\
    &2\beta_k -d^{k+1}_k=d^{k}_{k-1}-d^{k+1}_{k-1},\quad 3\leq k\leq n-1,\\
    &2\beta_n=d^{n}_{n-1},\quad n\geq 3.
\end{aligned}
\end{equation}

Consider the following symmetric matrix 
$
    \mathbf S = \mathbf A+\mathbf A^{\mathrm T}+\varepsilon \mathbf e_n^{\rm T}\mathbf e_n
$
with small constant $\varepsilon>0$ and $\mathbf e_n = (0,\cdots,0,1)\in \mathbb R^{1\times n}$.
According to Lemma~\ref{lemmapro}, if the condition \eqref{condition:rho} holds,  $\mathbf S$ satisfies the following three properties:
\begin{itemize}
\item[{\rm (1)}] $\forall\; 1\leq j < i \leq n$, $\left[ \mathbf S \right]_{i-1,j}\geq \left[ \mathbf S \right]_{i, j}$;
\item[{\rm (2)}] $\forall\; 1 < j \leq i \leq n$, $\left[ \mathbf S \right]_{i, j-1}< \left[ \mathbf S \right]_{i, j}$;
\item[{\rm (3)}] $\forall \;1< j < i \leq n$, $\left[ \mathbf S \right]_{i-1, j-1} - \left[ \mathbf S \right]_{i, j-1}\leq \left[ \mathbf S \right]_{i-1, j} - \left[ \mathbf S \right]_{i, j}$.
\end{itemize}
From \cite[Lemma 2.1]{CSIAM-AM-1-478}, $\mathbf S$ is positive definite. 
Let $\varepsilon \rightarrow 0$. 
We can claim that
$\mathbf A+\mathbf A^{\mathrm T}$
is positive semidefinite. 

In the following we will prove $[\mathbf B]_{kk}\ge0$, $k\ge 1$, under some constraints on $\rho_k$. 
We first provide two equivalent forms of $a_j^k$ according to \eqref{eq:aj}: $\forall 1\leq j\leq k-1$,
\begin{equation}\label{eq:akj_rem}
    \begin{aligned}
        a_{j}^k
        &=\int_0^1 \frac{-2 \tau_j(1-s)-\tau_{j+1}}{(\tau_{j}+\tau_{j+1})(t_k^*-(t_{j-1}+s\tau_j))^\alpha}\,\ds\\
        &=\frac{1}{\tau_{j}+\tau_{j+1}}\int_0^1 (t_k^*-(t_{j-1}+s \tau_j))^{-\alpha}\,{\rm d}( \tau_j s^2-(2\tau_j+\tau_{j+1})s)\\
     &=-(t_k^*-t_j)^{-\alpha}+\frac{\alpha\tau_j}{\tau_j+\tau_{j+1}}
    \int_0^{1} (\tau_j+\tau_{j+1}+s\tau_j)
    (1-s)(t_k^*-t_{j}+s\tau_j)^{-\alpha-1}\,\ds
    \end{aligned}
\end{equation}
and
\begin{equation}\label{eq:akj1_rem}
    \begin{aligned}
        a_{j}^k & =\int_0^1 \frac{-2 \tau_j(1-s)-\tau_{j+1}}{(\tau_{j}+\tau_{j+1})(t_k^*-(t_{j-1}+s\tau_j))^\alpha}\,\ds=\int_0^1 \frac{-2 \tau_j s-\tau_{j+1}}{(\tau_{j}+\tau_{j+1})(t_k^*-t_{j}+s \tau_j)^\alpha}\,\ds\\
        &=\frac{1}{\tau_{j}+\tau_{j+1}}\int_0^1 (t_k^*-t_{j}+s \tau_j)^{-\alpha}\,{\rm d}( - \tau_j s^2-\tau_{j+1}s)\\
    &=-(t_k^*-t_{j-1})^{-\alpha}-\frac{\alpha\tau_j}{\tau_j+\tau_{j+1}}
    \int_0^{1} (\tau_j+\tau_{j+1}-s\tau_j)(1-s)
    (t_k^*-t_{j-1}-s\tau_j)^{-\alpha-1}\,\ds.
    \end{aligned}
\end{equation}
Furthermore, we also reformulate $c_j^k$ in \eqref{eq:aj} as: $\forall 1\leq j\leq k-1$,
\begin{equation}\label{eq:ckj_rem}
\begin{aligned}
   c^{k}_j   & 
   = \int_0^1 \frac{\tau_j^2(2s-1)}{\tau_{j+1}(\tau_{j}+\tau_{j+1})(t_k^*-(t_{j-1}+s \tau_j))^\alpha}\,\ds\\
   &=\frac{\tau_j^2}{\tau_{j+1}(\tau_{j}+\tau_{j+1})}\int_0^1(t_k^*-(t_{j-1}+s \tau_j))^{-\alpha} {\rm d}(s^2-s)\\
  &=\frac{\alpha\tau_{j}^3}{\tau_{j+1}(\tau_{j}+\tau_{j+1})}\int_0^{1} s(1-s)(t_k^*-t_j+s\tau_{j})^{-\alpha-1}\  \ds.
  \end{aligned}
\end{equation}
In the following content, we consider four cases: $k=1$, $k=2$, $3\leq k\leq n-1$, and $k=n$.

{\bf Case 1:} When $k=1$, from \eqref{eq:aj} and $2\beta_1= -a^{2}_1$ in \eqref{betak}, we have
\begin{align*}
    [\mathbf B]_{11}&=\frac{\sigma^{1-\alpha}}{(1-\alpha)\tau_1^{\alpha}}-\frac12\int_0^1 \frac{2 \tau_1(1-\theta)+\tau_{2}}{(\tau_{1}+\tau_{2})(t_2^*-(t_{0}+\theta \tau_1))^\alpha}\,\dtheta\\
    &=\frac{\sigma^{1-\alpha}}{(1-\alpha)\tau_1^{\alpha}}-\frac{1}{2\tau_1^\alpha}\int_0^1\frac{2s+\rho_2}{(1+\rho_2)(\sigma\rho_2+s)^\alpha}\ds\\
    &> \frac{\sigma^{1-\alpha}}{(1-\alpha)\tau_1^{\alpha}}-\frac{1}{2\tau_1^\alpha(\sigma\rho_2)^\alpha}\int_0^1\frac{2s+\rho_2}{(1+\rho_2)}\ds=\frac{1}{2(1-\alpha)(\sigma\tau_1)^\alpha}\left(2\sigma-\frac{1-\alpha}{\rho_2^\alpha}\right).
    \end{align*}
 To ensure $ [\mathbf B]_{11}\ge 0$, we impose
\begin{equation}\label{eq:B1}
    2\sigma-\frac{1-\alpha}{\rho_2^\alpha}\ge0.
\end{equation}

{\bf Case 2:} When $k=2$, combining $2\beta_2 -d^{3}_2=a^{3}_1-a^{2}_1$ in \eqref{betak} and the property (P6) in Lemma \eqref{lemmapro}  gives
\begin{equation}\label{eq:b22}
    \begin{aligned}
      [\mathbf B]_{22} =&c_{1}^2+\frac{\sigma^{1-\alpha}}{(1-\alpha)\tau_2^{\alpha}}-\frac12(d^{3}_2+a^3_1-a^2_1)\\
    =&\frac12c_{1}^2+\frac{\sigma^{1-\alpha}}{(1-\alpha)\tau_2^{\alpha}}+\frac12(a^2_1-a^3_1+a^3_2)+\frac12(c^2_1-c^3_1)\\
    \ge&\frac12c_{1}^2+\frac{\sigma^{1-\alpha}}{(1-\alpha)\tau_2^{\alpha}}+\frac12(a^2_1-a^3_1+a^3_2).
    \end{aligned}
\end{equation}
Using the forms \eqref{eq:akj_rem} for $a_1^2,~a_1^3$ and \eqref{eq:akj1_rem} for $a_2^3$, we can derive
\begin{align}\label{a123}
    a^2_1-a^3_1+a^3_2=&-(\sigma\tau_2)^{-\alpha}+\frac{\alpha\tau_1}{\tau_1+\tau_2}
    \int_0^{1} (\tau_1+\tau_{2}+s\tau_1)
    (1-s)(t_2^*-t_{1}+s\tau_1)^{-\alpha-1}\,\ds\\\nonumber
    &-\frac{\alpha\tau_1}{\tau_1+\tau_{2}}
    \int_0^{1} (\tau_1+\tau_{2}+s\tau_1)
    (1-s)(t_3^*-t_{1}+s\tau_1)^{-\alpha-1}\,\ds
    \\\nonumber
    &-\frac{\alpha\tau_{2}}{\tau_{2}+\tau_{3}}
     \int_0^{1}(\tau_{2}+\tau_{3}-s\tau_{2}) (1-s)(t_3^*-t_{1}-s\tau_2)^{-\alpha-1}\,\ds\\\nonumber
     >&-(\sigma\tau_2)^{-\alpha}-\frac{\alpha\tau_{2}}{\tau_{2}+\tau_{3}}
     \int_0^{1}(\tau_{2}+\tau_{3}-s\tau_{2}) (1-s)(\tau_2+\sigma\tau_3-s\tau_2)^{-\alpha-1}\,\ds\\\nonumber
     =&-(\sigma\tau_2)^{-\alpha}-\frac{\alpha}{(1+\rho_{3})\tau_2^\alpha}
    \int_0^{1} 
    s(\rho_{3}+s)(\sigma\rho_{3}+s)^{-\alpha-1}\,\ds\\\nonumber
     >&-(\sigma\tau_2)^{-\alpha}-\frac{\alpha}{(1+\rho_{3})(\sigma\tau_2)^\alpha\rho_{3}^\alpha}
     \int_0^{1}\frac{s(\rho_{3}+s)}{\sigma\rho_{3}+s} \,\ds.\nonumber
\end{align}
Substituting \eqref{a123} into \eqref{eq:b22} yields
\begin{equation*}
\begin{aligned}
    [\mathbf B]_{22}
    \ge&\frac12c_{1}^2+\frac{1}{2(1-\alpha)(\sigma\tau_2)^{\alpha}}\left(2\sigma-(1-\alpha)-\frac{\alpha(1-\alpha)}{(1+\rho_{3})\rho_{3}^\alpha}
     \int_0^{1}\frac{s(\rho_{3}+s)}{\sigma\rho_{3}+s} \,\ds\right).
    \end{aligned}
\end{equation*}
To make sure $[\mathbf B]_{22}\ge0$, we impose
\begin{equation}\label{eq:B2}
    2\sigma-(1-\alpha)-\frac{\alpha(1-\alpha)}{(1+\rho_3)\rho_{3}^\alpha}
     \int_0^{1}\frac{s(\rho_{3}+s)}{\sigma\rho_{3}+s} \,\ds\ge0.
\end{equation}

{\bf Case 3:} When $3\le k \le n-1$,
using $2\beta_k=d^{k+1}_k+d^{k}_{k-1}-d^{k+1}_{k-1}$ in \eqref{betak} and $d^k_j=c^k_{j-1}-a^k_j$, we have
\begin{align}\label{eq:kn}
    & [\mathbf B]_{kk}=\frac{\sigma^{1-\alpha}}{(1-\alpha)\tau_k^{\alpha}}+\frac12c_{k-1}^k+\frac12(c_{k-1}^k-d^{k+1}_k-d^{k}_{k-1}+d^{k+1}_{k-1})\\
    & = \frac{\sigma^{1-\alpha}}{(1-\alpha)\tau_k^{\alpha}}+\frac12c_{k-1}^k+\frac12[(c^{k}_{k-1}-c^{k+1}_{k-1})-(c^{k}_{k-2}-c^{k+1}_{k-2} )+(-a_{k-1}^{k+1}+a^{k+1}_{k}+a^k_{k-1})].\nonumber
\end{align}
From  \eqref{eq:akj_rem} -- \eqref{eq:ckj_rem}, if \eqref{condition:rho} holds for $j=k-1$, we have
\begin{align}\label{eq:3.80-1}
    &(c^{k}_{k-1}-c^{k+1}_{k-1})-(c^{k}_{k-2}-c^{k+1}_{k-2} )+(-a_{k-1}^{k+1}+a^{k+1}_{k}+a^k_{k-1})\\
    =&\frac{\alpha\tau_{k-1}^3}{\tau_{k}(\tau_{k-1}+\tau_{k})}\int_0^{1} s(1-s)\bigg[(t_k^*-t_{k-1}+s\tau_{k-1})^{-\alpha-1}
     -(t_{k+1}^*-t_{k-1}+s\tau_{k-1})^{-\alpha-1}\bigg]\  \ds\nonumber\\
   &-\frac{\alpha\tau_{k-2}^3}{\tau_{k-1}(\tau_{k-2}+\tau_{k-1})}\int_0^{1} s(1-s)\bigg[(t_k^*-t_{k-2}+s\tau_{k-2})^{-\alpha-1}\nonumber\\
   & \quad -(t_{k+1}^*-t_{k-2}+s\tau_{k-2})^{-\alpha-1}\bigg]\  \ds\nonumber\\
   &+\frac{\alpha\tau_{k-1}}{\tau_{k-1}+\tau_{k}}
    \int_0^{1} (\tau_{k-1}+\tau_{k}+s\tau_{k-1})
    (1-s)\bigg[(t_k^*-t_{k-1}+s\tau_{k-1})^{-\alpha-1}\nonumber\\
    &\quad-(t_{k+1}^*-t_{k-1}+s\tau_{k-1})^{-\alpha-1}\bigg]\,\ds\nonumber\\
    &-(\sigma\tau_k)^{-\alpha}-\frac{\alpha\tau_{k}}{\tau_{k}+\tau_{k+1}}
     \int_0^{1}(\tau_{k}+\tau_{k+1}-s\tau_{k}) (1-s)(t_{k+1}^*-t_{k-1}-s\tau_{k})^{-\alpha-1}\,\ds\nonumber\\
   >&-(\sigma\tau_k)^{-\alpha}-\frac{\alpha\tau_{k}}{\tau_{k}+\tau_{k+1}}
     \int_0^{1}s(\tau_{k+1}+s\tau_{k}) (\sigma\tau_{k+1}+s\tau_{k})^{-\alpha-1}\,\ds\nonumber\\
     =&-(\sigma\tau_k)^{-\alpha}-\frac{\alpha}{(1+\rho_{k+1})\tau_k^\alpha}
    \int_0^{1} 
    s(\rho_{k+1}+s)(\sigma\rho_{k+1}+s)^{-\alpha-1}\,\ds\nonumber\\
     >&-(\sigma\tau_k)^{-\alpha}-\frac{\alpha}{(1+\rho_{k+1})(\sigma\tau_k)^\alpha\rho_{k+1}^\alpha}
     \int_0^{1}\frac{s(\rho_{k+1}+s)}{\sigma\rho_{k+1}+s} \,\ds\nonumber,
    \end{align}
   where we use the forms \eqref{eq:akj_rem} for $a_{k-1}^{k},~a_{k-1}^{k+1}$ and \eqref{eq:akj1_rem} for $a_k^{k+1}$. 
The first inequality in \eqref{eq:3.80-1} can be derived as follows.
For fixed $j$, it is easy to see that
\begin{equation*}
       (t_k^*-t_{k-1}+s\tau_{k-1})^{-\alpha-1}-(t_{k+1}^*-t_{k-1}+s\tau_{k-1})^{-\alpha-1}>0
\end{equation*}
decreases w.r.t. $s$ and
$
 \int_0^1(1-3s)(1-s)= 0,
$
thus
\begin{equation*}
\begin{aligned}
    &\int_0^{1} (\tau_{k-1}+\tau_{k}+s\tau_{k-1})
    (1-s)[(t_k^*-t_{k-1}+s\tau_{k-1})^{-\alpha-1}-(t_{k+1}^*-t_{k-1}+s\tau_{k-1})^{-\alpha-1}]\,\ds\\
    &\geq    \int_0^{1} (4\tau_{k-1}+3\tau_{k})s
    (1-s)[(t_k^*-t_{k-1}+s\tau_{k-1})^{-\alpha-1}-(t_{k+1}^*-t_{k-1}+s\tau_{k-1})^{-\alpha-1}]\,\ds.
\end{aligned}
\end{equation*}
 Moreover the convexity of the function $t^{-1-\alpha}$ gives
\begin{equation*}
\begin{aligned}
    &     (t_k^*-t_{k-1}+s\tau_{k-1})^{-\alpha-1}-(t_{k+1}^*-t_{k-1}+s\tau_{k-1})^{-\alpha-1}\\
    & >(t_k^*-t_{k-2}+s\tau_{k-2})^{-\alpha-1}-(t_{k+1}^*-t_{k-2}+s\tau_{k-2})^{-\alpha-1},
\end{aligned}
\end{equation*}
Then we can get  the following result:
\begin{align*}
    &\frac{\alpha\tau_{k-1}^3}{\tau_{k}(\tau_{k-1}+\tau_{k})}\int_0^{1} s(1-s)\bigg[(t_k^*-t_{k-1}+s\tau_{k-1})^{-\alpha-1}
     -(t_{k+1}^*-t_{k-1}+s\tau_{k-1})^{-\alpha-1}\bigg]\  \ds\\
   &-\frac{\alpha\tau_{k-2}^3}{\tau_{k-1}(\tau_{k-2}+\tau_{k-1})}\int_0^{1} s(1-s)\bigg[(t_k^*-t_{k-2}+s\tau_{k-2})^{-\alpha-1} \\
   &\quad -(t_{k+1}^*-t_{k-2}+s\tau_{k-2})^{-\alpha-1}\bigg]\  \ds\\
   &+\frac{\alpha\tau_{k-1}}{\tau_{k-1}+\tau_{k}}
    \int_0^{1} (\tau_{k-1}+\tau_{k}+s\tau_{k-1})
    (1-s)\bigg[(t_k^*-t_{k-1}+s\tau_{k-1})^{-\alpha-1}\\
    &\quad-(t_{k+1}^*-t_{k-1}+s\tau_{k-1})^{-\alpha-1}\bigg]\,\ds\\
    >&\alpha\left(\frac{\tau_{k-1}^3}{\tau_{k}(\tau_{k-1}+\tau_{k})}-\frac{\tau_{k-2}^3}{\tau_{k-1}(\tau_{k-2}+\tau_{k-1})}+\frac{(4\tau_{k-1}+3\tau_{k})\tau_{k-1}}{\tau_{k-1}+\tau_{k}}\right)\int_0^{1} s(1-s)
     \\&\qquad\bigg[(t_k^*-t_{k-1}+s\tau_{k-1})^{-\alpha-1}
     -(t_{k+1}^*-t_{k-1}+s\tau_{k-1})^{-\alpha-1}\bigg]\, \ds
     \ge 0,
     \end{align*}
     as \eqref{condition:rho} for $j=k-1$ gives
     \begin{equation*}
         \frac{\tau_{k-1}^3}{\tau_{k}(\tau_{k-1}+\tau_{k})}-\frac{\tau_{k-2}^3}{\tau_{k-1}(\tau_{k-2}+\tau_{k-1})}+\frac{(4\tau_{k-1}+3\tau_{k})\tau_{k-1}}{\tau_{k-1}+\tau_{k}}\ge 0.
     \end{equation*}
Combining \eqref{eq:3.80-1} with  \eqref{eq:kn} yields
\begin{equation*}
\begin{aligned}
    [\mathbf B]_{kk}
    \ge&\frac12c_{k-1}^k+\frac{1}{2(1-\alpha)(\sigma\tau_k)^{\alpha}}\left(2\sigma-(1-\alpha)-\frac{\alpha(1-\alpha)}{(1+\rho_{k+1})\rho_{k+1}^\alpha}
     \int_0^{1}\frac{s(\rho_{k+1}+s)}{\sigma\rho_{k+1}+s} \,\ds\right).
    \end{aligned}
\end{equation*}
Thus, to ensure
$
[\mathbf B]_{kk} \ge 0
$ 
for $3\le k \le n-1$,
it is sufficient to impose  
\begin{equation}\label{eq:B3}
\begin{aligned}
&\frac{1}{\rho_{k}}\ge\frac{1}{\rho_{k-1}^2(1+\rho_{k-1})}-3,\\
&2\sigma-(1-\alpha)-\frac{\alpha(1-\alpha)}{(1+\rho_{k+1})\rho_{k+1}^\alpha}\int_0^{1}\frac{s(\rho_{k+1}+s)}{\sigma\rho_{k+1}+s} \,\ds\ge0.
 \end{aligned}
\end{equation}

{\bf Case 4:}
When $k=n$, we show $[\mathbf B]_{nn}\ge 0$ under some constraints on $\rho_n$.
From \eqref{betak}, \eqref{eq:akj_rem} and  \eqref{eq:ckj_rem}, we can derive
    \begin{align}\label{ineq:rhon}
   & [\mathbf B]_{nn} \\
   = &c_{n-1}^n+\frac{\sigma^{1-\alpha}}{(1-\alpha)\tau_n^{\alpha}}-\frac12(c^n_{n-2}-a^n_{n-1})\nonumber\\
   =&\frac12c_{n-1}^n+\frac{\sigma^{1-\alpha}}{(1-\alpha)\tau_n^{\alpha}}+\frac12(c_{n-1}^n-c^n_{n-2}+a^n_{n-1})\nonumber\\
    =&\frac12c_{n-1}^n+\frac{\sigma^{1-\alpha}}{(1-\alpha)\tau_n^{\alpha}}
    +\frac12\bigg(\frac{\alpha\tau_{n-1}^3}{\tau_{n}(\tau_{n-1}+\tau_{n})}\int_0^{1} s(1-s)(t_n^*-t_{n-1}+s\tau_{n-1})^{-\alpha-1}\  \ds\nonumber\\
    &-\frac{\alpha\tau_{n-2}^3}{\tau_{n-1}(\tau_{n-2}+\tau_{n-1})}\int_0^{1} s(1-s)(t_n^*-t_{n-2}+s\tau_{n-2})^{-\alpha-1}\  \ds  -(\sigma\tau_n)^{-\alpha}\nonumber\\
    &+\frac{\alpha\tau_{n-1}}{\tau_{n-1}+\tau_{n}}
    \int_0^{1} (\tau_{n-1}+\tau_{n}+s\tau_{n-1})
    (1-s)(t_n^*-t_{n-1}+s\tau_{n-1})^{-\alpha-1}\,\ds\bigg)\nonumber\\
     >&\frac12c_{n-1}^n+\frac{1}{2(1-\alpha)(\sigma\tau_{n})^{\alpha}}\left(
    2\sigma
  -(1-\alpha)\right),\nonumber
    \end{align}
if \eqref{condition:rho} holds for $j=n-1$.
The proof of the last inequality in \eqref{ineq:rhon} is similar to the previous proof of \eqref{eq:3.80-1},
where we use the facts 
\begin{equation*}
\begin{aligned}
    &\int_0^{1} (\tau_{n-1}+\tau_{n}+s\tau_{n-1})
    (1-s)(t_n^*-t_{n-1}+s\tau_{n-1})^{-\alpha-1}\,\ds\\
    &\geq    \int_0^{1} (4\tau_{n-1}+3\tau_{n})s
    (1-s)(t_n^*-t_{n-1}+s\tau_{n-1})^{-\alpha-1}\,\ds,
\end{aligned}
\end{equation*}
 and
\begin{equation*}
\begin{aligned}
    (t_n^*-t_{n-1}+s\tau_{n-1})^{-\alpha-1}
     >(t_n^*-t_{n-2}+s\tau_{n-2})^{-\alpha-1}.
\end{aligned}
\end{equation*}
We omit the details here. 
To ensure 
$
[\mathbf B]_{nn}\ge 0,
$
it is sufficient to impose
\begin{equation}\label{eq:B4}
\begin{aligned}
&\frac{1}{\rho_{n}}\ge\frac{1}{\rho_{n-1}^2(1+\rho_{n-1})}-3,\quad 2\sigma
  -(1-\alpha)\geq 0.
 \end{aligned}
\end{equation}

Combining \eqref{eq:B1}, \eqref{eq:B2}, \eqref{eq:B3} and \eqref{eq:B4},  we can conclude that if  the condition \eqref{condition:rho} holds for $3\le k\le n$ and 
\begin{equation}\label{conditionB}
\begin{aligned}
    &2\sigma-\frac{1-\alpha}{\rho_2^\alpha}\ge0,\\
    &2\sigma-(1-\alpha)-\frac{\alpha(1-\alpha)}{(1+\rho_{k+1})\rho_{k+1}^\alpha}\int_0^{1}\frac{s(\rho_{k+1}+s)}{\sigma\rho_{k+1}+s} \,\ds\ge0,\quad 2\le k\le n-1,\\
    &2\sigma
  -(1-\alpha)\geq 0,
\end{aligned}
\end{equation}
then $[\mathbf B]_{kk}\ge 0$, $k\ge1$.
We have proved the following results:
\begin{itemize}
    \item Positive semidefiniteness of $\mathbf A+\mathbf A^{\rm T}$: \eqref{condition:rho} holds;
    \item Positive definiteness of $\mathbf B$:  \eqref{conditionB} holds and  \eqref{condition:rho} holds for $3\le k\le n$;
\end{itemize}
which ensure 
\begin{align*}
    \mathbf M+\mathbf M^{\rm T} &= (\mathbf A+\mathbf A^{\rm T})+2\mathbf B\geq 2\mathbf B
    \geq (1-\alpha)^{-1}{\rm diag}\left(g_1(\alpha),g_2(\alpha),\ldots,g_{n}(\alpha)\right)\geq 0,
\end{align*}
where $g_k(\alpha)$ is given in \eqref{eq:gk}.
In the following content, we just simplify the above constraints for the positive semidefiniteness of $\mathbf M+\mathbf M^{\rm T}$. 

The condition \eqref{condition:rho} actually says that $(\rho_j,\rho_{j+1})$ lies on the right-hand side of the blue solid curve in Figure~\ref{fig:rho}.
Let $\rho_*\approx 0.356341$ be the root of $\rho(1+\rho)=1-3\rho^2(1+\rho).$
It can be found that if $\rho_{j}\leq  \rho_*$ for some $j$, then $\rho_*\geq\rho_j\geq \rho_{j+1}\geq\rho_{j+2}\geq \ldots$ and $\tau_j$ will shrink to $0$ quickly as $j$ increases.
This doesn't make sense in practice. We shall impose 
$
    \rho_{j}>\rho_*,~ \forall j\geq 2.
$
As a consequence, we have the following constraints: for $j\geq 2$,
\begin{equation}\label{condrhorho}
\left\{
\begin{aligned}
     &\rho_*<\rho_{j+1} \leq  \frac{\rho_j^2(1+\rho_j)}{1-3\rho_j^2(1+\rho_j)},&&  \rho_*<\rho_j< \eta,\\
      &\rho_*<\rho_{j+1},&& \eta\le \rho_j,
\end{aligned}
\right.
\end{equation}
where 
$ \eta\approx0.475329$
be the unique positive root of 
$1-3\rho^2(1+\rho)=0.$

 \begin{figure}
    \centering
    \includegraphics[width=0.5\textwidth]{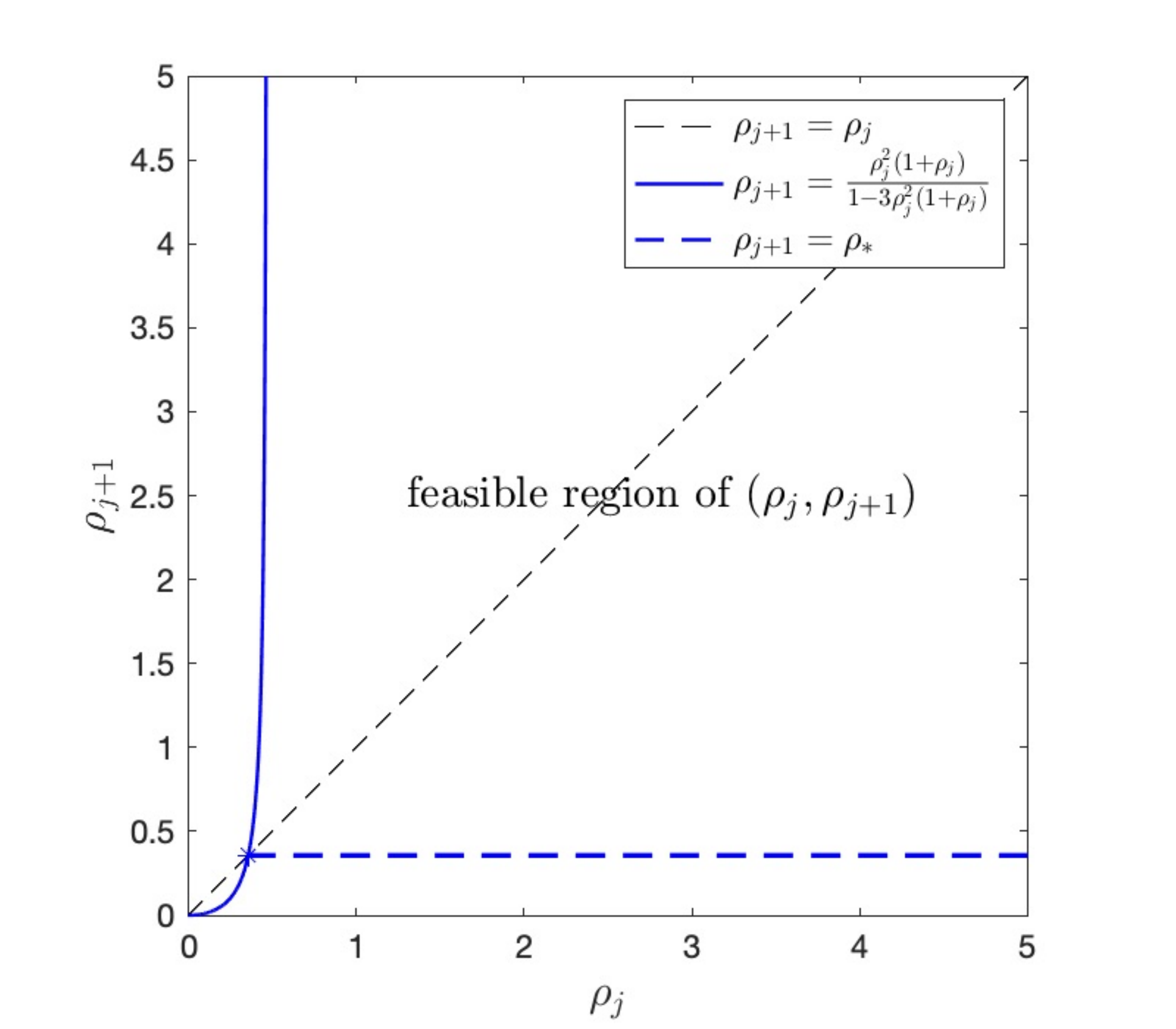}
    \caption{Feasible region of $(\rho_j,\rho_{j+1})$, on the right-hand side of the blue solid curve and above the blue dashed line, obtained from the constraint \eqref{condrhorho} for $j\geq 2$. The blue star marker denotes $(\rho_*,\rho_*)$.}
    \label{fig:rho}
\end{figure}

We now prove that \eqref{condrhorho} leads to \eqref{conditionB} when $\sigma = 1-\alpha/2\geq 1/2$.
In fact, it is easy to check that
\begin{equation*}
    2\sigma-\frac{1-\alpha}{\rho_2^\alpha}\ge2-\alpha-\frac{1-\alpha}{\rho_*^\alpha}\ge 0,\quad 2\sigma-(1-\alpha)=1,
\end{equation*}
and for $2\le k\le n-1$, we have 
\begin{align*}
       & 2\sigma-(1-\alpha)-\frac{\alpha(1-\alpha)}{(1+\rho_{k+1})\rho_{k+1}^\alpha}\int_0^{1}\frac{s(\rho_{k+1}+s)}{\sigma\rho_{k+1}+s} \,\ds\\\ge& 1-\frac{\alpha(1-\alpha)}{(1+\rho_{k+1})\rho_{k+1}^\alpha}\int_0^{1}\frac{s(\rho_{k+1}+s)}{\frac12\rho_{k+1}+s} \,\ds\\
        \ge& 1-\frac{\alpha(1-\alpha)}{(1+\rho_{k+1})\rho_{k+1}^\alpha}\ge 1-\frac{\alpha(1-\alpha)}{(1+\rho_*)\rho_*^\alpha}\ge 1-\frac{1}{4(1+\rho_*)\rho_*}\ge0.
\end{align*}

In summary, if \eqref{condrhorho} holds, then 
\begin{equation*}\label{eq:Ppd}
    \mathcal B_n(u,u) = \sum_{k=1}^{n}\langle L_k^\alpha u, \delta_k u\rangle\geq \sum_{k=1}^{n} \frac{g_k(\alpha)}{2\Gamma(2-\alpha)} \|\delta_k u\|^2_{L^2(\Omega)}\geq 0,
\end{equation*}
with $g_k(\alpha)$ given in \eqref{eq:gk}.
\end{proof}
\begin{remark}
If $\rho_k\ge \eta \approx0.475329$ for all $k\ge 2$, then the condition \eqref{thm1cond} holds, for which  the positive semidefiniteness of bilinear form $\mathcal B_n(u,u)$ \eqref{eq:PD} can be guaranteed.
\end{remark}

\section{Stability and convergence of L2-1$_\sigma$ method for subdiffusion equation}\label{sect4}
We consider the following subdiffusion equation:
\begin{equation}\label{eq:subdiffusion}
\begin{aligned}
    \partial_t^\alpha u(t,x) & = \Delta u(t,x)+f(t,x),&& (t,x)\in (0,\infty)\times\Omega,\\
    u(t,x) &= 0,&& (t,x)\in (0,\infty)\times\partial\Omega,\\
    u(0,x)& = u^0(x),&& x\in \Omega,
\end{aligned}    
\end{equation}
where $\Omega$ is a bounded Lipschitz domain in $\mathbb R^d$. 
Given an arbitrary nonuniform mesh $\{\tau_k\}_{k\ge1}$, the L2-$1_\sigma$ scheme of this subdiffusion equation is written as
\begin{equation}\label{eq:sch_sub}
\begin{aligned}
    L_k^{\alpha,*} u 
    &= (1-\alpha/2)\Delta u^k+\alpha/2\Delta u^{k-1}+f^k,&&  \text{in}~ \Omega,\\
    u^k&=0,&&\text{on} ~\partial\Omega,
\end{aligned}    
\end{equation}
where $f^k=f(t_k^*,\cdot)$.

\subsection{Global-in-time $H^1$-stability of L2-1$_\sigma$ scheme for subdiffusion equation}
\begin{theorem}\label{thm2}
Assume that $f(t,x) \in L^\infty([0,\infty);L^2(\Omega)) \cap BV([0,\infty); L^2(\Omega))$ is a bounded variation function in time and $ u^0\in H_0^1(\Omega)$.
If the nonuniform mesh $\{\tau_k\}_{k\ge1}$ satisfies \eqref{thm1cond} (for example $\rho_k\geq \eta\approx0.475329$ for $k\geq 2$),
then the numerical solution $u^n$ of the L2-1$_\sigma$ scheme \eqref{eq:sch_sub} satisfies the following global-in-time $H^1$-stability
\begin{equation*}
\begin{aligned}
    \|\nabla u^n\|_{L^2(\Omega)}
   &\le\|\nabla u^0\|_{L^2(\Omega)} +2C_{f}C_{\Omega},
\end{aligned}
\end{equation*}
where $C_f$ depends on the source term $f$, $C_\Omega$ is the Sobolev embedding constant depending on $\Omega$ and the spatial dimension $d$.
\end{theorem}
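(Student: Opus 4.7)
The plan is to run a discrete energy argument in the $H^1$ norm, using the positive semidefiniteness of $\mathcal B_n$ already established in Theorem~\ref{thm1} as the crucial structural ingredient that makes the argument work \emph{globally} in time.

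First I would test the scheme \eqref{eq:sch_sub} against $\delta_k u = u^k - u^{k-1}$ in the $L^2(\Omega)$ inner product. Since $\sigma = 1 - \alpha/2$ and $u^k$ vanishes on $\partial\Omega$, integration by parts together with the algebraic identity $\langle a, a-b \rangle = \tfrac{1}{2}(\|a\|^2 - \|b\|^2 + \|a-b\|^2)$ yields the pointwise-in-$k$ identity
\begin{equation*}
\sigma\langle -\Delta u^k,\delta_k u\rangle + (1-\sigma)\langle -\Delta u^{k-1},\delta_k u\rangle
= \tfrac{1}{2}\bigl(\|\nabla u^k\|^2 - \|\nabla u^{k-1}\|^2\bigr) + \tfrac{1-\alpha}{2}\|\nabla \delta_k u\|^2,
\end{equation*}
with norms taken in $L^2(\Omega)$. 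Substituting into the tested scheme gives
\begin{equation*}
\tfrac{1}{2}\bigl(\|\nabla u^k\|^2 - \|\nabla u^{k-1}\|^2\bigr) + \langle L_k^{\alpha,*} u,\delta_k u\rangle + \tfrac{1-\alpha}{2}\|\nabla \delta_k u\|^2 = \langle f^k,\delta_k u\rangle.
\end{equation*}

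Next I would telescope by summing over $k = 1, \ldots, n$. The Laplacian contribution telescopes into $\tfrac{1}{2}\|\nabla u^n\|^2 - \tfrac{1}{2}\|\nabla u^0\|^2$; the fractional contribution is exactly $\mathcal B_n(u,u)$, which is $\geq 0$ by Theorem~\ref{thm1} under \eqref{thm1cond}; and the dissipation term $\tfrac{1-\alpha}{2}\sum_k \|\nabla \delta_k u\|^2$ is non-negative and can simply be dropped. This gives the clean inequality
\begin{equation*}
\tfrac{1}{2}\|\nabla u^n\|^2 \le \tfrac{1}{2}\|\nabla u^0\|^2 + \sum_{k=1}^n \langle f^k, \delta_k u\rangle .
\end{equation*}

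The last step is to control the forcing sum globally in $n$. Here I would perform Abel summation by parts,
\begin{equation*}
\sum_{k=1}^n \langle f^k, u^k - u^{k-1}\rangle = \langle f^n, u^n\rangle - \langle f^1, u^0\rangle + \sum_{k=1}^{n-1}\langle f^k - f^{k+1}, u^k\rangle,
\end{equation*}
then apply Cauchy--Schwarz together with the Poincar\'e--Sobolev inequality $\|u\|_{L^2} \le C_\Omega \|\nabla u\|_{L^2}$ valid on $H^1_0(\Omega)$. The $L^\infty([0,\infty);L^2)$ hypothesis uniformly bounds each $\|f^k\|_{L^2}$, while the BV hypothesis bounds $\sum_k \|f^{k+1}-f^k\|_{L^2}$; absorbing both into a single constant $C_f$ depending only on the data $f$ produces an estimate of the shape
\begin{equation*}
\Bigl|\sum_{k=1}^n \langle f^k, \delta_k u\rangle\Bigr| \le C_f C_\Omega \bigl(\|\nabla u^0\| + \|\nabla u^n\| + \max_{1\le k \le n-1}\|\nabla u^k\|\bigr).
\end{equation*}
Writing $M_n := \max_{0\le k\le n}\|\nabla u^k\|$, combining the two inequalities, and taking the maximum over the time-index yields a quadratic inequality in $M_n$ of the form $M_n^2 - 2\alpha_1 M_n - (\|\nabla u^0\| + \alpha_2)^2 \le 0$ with constants proportional to $C_f C_\Omega$; solving it delivers the desired bound $\|\nabla u^n\| \le \|\nabla u^0\| + 2 C_f C_\Omega$ (up to a redefinition of $C_f$).

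The only genuinely non-routine ingredient is the global-in-time sign $\mathcal B_n(u,u) \ge 0$, which is exactly what Theorem~\ref{thm1} provides. The remaining difficulty is bookkeeping: one must use Abel summation (rather than an elementwise Cauchy--Schwarz on $\langle f^k,\delta_k u\rangle$, which would force a $\sum \|\delta_k u\|$ that is uncontrolled) and close the estimate via a max-plus-quadratic argument to avoid picking up a discrete Gr\"onwall factor that would destroy the uniform-in-$n$ bound. No further structural obstacle is expected.
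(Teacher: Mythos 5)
Your proposal is correct and follows essentially the same route as the paper: test the scheme with $\delta_k u$, telescope the weighted Laplacian term, invoke Theorem~\ref{thm1} for $\mathcal B_n(u,u)\ge 0$, handle the source by Abel summation with the $L^\infty$/BV hypotheses and the Poincar\'e constant $C_\Omega$, and close with a maximum-over-$n$ quadratic inequality. The only differences are presentational (a per-step identity before summing, and a slightly looser grouping of the forcing bound absorbed into a redefined $C_f$), which the theorem's statement of $C_f$ permits.
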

\begin{proof}
Multiplying \eqref{eq:sch_sub} with $\delta_k u$, integrating over $\Omega$, and summing up the derived equations over $k$ yield
\begin{equation*}
\begin{aligned}
\sum_{k=1}^{n}\langle L_k^\alpha u, \delta_k u\rangle
    =& \sum_{k=1}^{n}\langle (1-\alpha/2)\Delta u^k+\alpha/2\Delta u^{k-1}, \delta_k u\rangle +\sum_{k=1}^{n}\langle f^k, \delta_k u \rangle\\
     = & -\frac12 \|\nabla u^n\|_{L^2(\Omega)}^2 + \frac12 \|\nabla u^0\|_{L^2(\Omega)}^2 - \frac{1-\alpha}{2} \sum_{k=1}^{n} \|\nabla \delta_k u\|_{L^2(\Omega)}^2\\
     &+\langle f^n,  u^n\rangle-\langle f^1,  u^0\rangle-\sum_{k=2}^{n}\langle \delta_k f, u^{k-1}\rangle.\end{aligned}    
\end{equation*}
Applying the Cauchy--Schwarz inequality yields
\begin{equation*}
\begin{aligned}
    &\langle f^n,  u^n\rangle-\langle f^1,  u^0\rangle+\sum_{k=2}^{n}\langle \delta_k f,  u^{k-1}\rangle\\
    \le &\left( 2\|f\|_{L^\infty([0,\infty);L^2(\Omega))}+\|f\|_{BV([0,\infty); L^2(\Omega))}\right) \max_{0\le k\le n} {\|u^k\|_{L^2(\Omega)}}\\
     \le &C_{f}C_{\Omega}  \max_{0\le k\le n} {\|\nabla u^k\|_{L^2(\Omega)}},
\end{aligned} 
\end{equation*}
where $C_{f}= 2\|f\|_{L^\infty([0,\infty);L^2(\Omega))}+\|f\|_{BV([0,\infty); L^2(\Omega))}$,  and $C_{\Omega}$ is the Sobolev embedding constant depending on $\Omega$ and the spatial dimension. From Theorem~\ref{thm1}, we then have for $n\ge1$,
\begin{equation}\label{case2}
\begin{aligned}
\|\nabla u^n\|_{L^2(\Omega)}^2 
     \le &\|\nabla u^0\|_{L^2(\Omega)}^2 -  (1-\alpha)\sum_{k=1}^{n} \|\nabla \delta_k u\|_{L^2(\Omega)}^2-\sum_{k=1}^{n}\frac{g_k(\alpha)}{\Gamma(2-\alpha)}\| \delta_k u\|^2_{L^2(\Omega)}\\
     &+2C_{f} C_{\Omega} \max_{0\le k\le n} {\|\nabla u^k\|_{L^2(\Omega)}}\\
     \le &\|\nabla u^0\|_{L^2(\Omega)}^2+2C_{f}C_{\Omega}  \max_{0\le k\le n} {\|\nabla u^k\|_{L^2(\Omega)}}.
     \end{aligned}    
\end{equation}
For any $N \ge 1$, we take $\max_{0\le n\le N}$ on both sides of \eqref{case2}, to obtain
\begin{equation*}
\begin{aligned}
\max_{0\le n\le N}\|\nabla u^n\|_{L^2(\Omega)}^2 
     \le  \|\nabla u^0\|_{L^2(\Omega)}^2+2C_{f}C_{\Omega}  \max_{0\le n\le N} {\|\nabla u^n\|_{L^2(\Omega)}},
     \end{aligned}    
\end{equation*}
which indicates 
\begin{equation*}
\begin{aligned}
    \max_{0\le n\le N}\|\nabla u^n\|_{L^2(\Omega)}&\le C_{f}C_{\Omega} +\sqrt{(C_{f}C_{\Omega})^2+\|\nabla u^0\|_{L^2(\Omega)}^2}
   \le\|\nabla u^0\|_{L^2(\Omega)} +2C_{f}C_{\Omega}.
\end{aligned}
\end{equation*}
The proof is completed.
\end{proof}
\begin{remark}
Assume that the solution of subdiffusion equation satisfies $u(t,x)\in C([0,\infty);H_0^1(\Omega))\cap C^1((0,\infty);H_0^1(\Omega))$ and the source term satisfies
$f(t,x) \in C([0,\infty);L^2(\Omega)),~ \partial_t f(t,x)\in L^1([0,\infty);L^2(\Omega))$.
For any fixed $T>0$,
multiplying the first equation of \eqref{eq:subdiffusion} with $\partial_t u(t,x)$ and  integrating over $(0,T)\times\Omega$ yield
\begin{equation*}
\begin{aligned}
    &\int_0^T\int_\Omega \partial_t^\alpha u(t,x) \partial_t u(t,x)\ \dx\dt \\
    = & \frac12\int _0^T \int_\Omega \partial_t|\nabla u(t,x)|^2\ \dx\dt +\int_0^T\int_\Omega f(t,x)\partial_t u(t,x)\ \dx\dt.
\end{aligned}    
\end{equation*}
According to~\cite{tang2019energy},  
\begin{equation*}
    \int_0^T\int_\Omega \partial_t^\alpha u(t,x) \partial_t u(t,x)\ \dx\dt\ge 0,
\end{equation*}
and moreover,
\begin{align*}
    &\int_0^T\int_\Omega f(t,x) \partial_t u(t,x)\ \dx\dt\\=&\left(\int_\Omega f(t,x)u(t,x)\ \dx\right)\bigg|_0^T-\int_0^T\int_\Omega \partial_t f(t,x)u(t,x)\ \dx\dt\\
    \le& \left(2\|f\|_{L^\infty([0,\infty);L^2(\Omega))}+\int_0^\infty\| \partial_t f(t,x)\|_{L^2(\Omega)}\,\dt\right)C_\Omega\|\nabla u\|_{L^\infty([0,T];L^2(\Omega))}\\
    =:& C_f^{cont}C_\Omega\|\nabla u\|_{L^\infty([0,T];L^2(\Omega))}.
\end{align*}
Thus we derive the $H^1$-stability at the continuous level 
\begin{equation*}
\begin{aligned}
    \|\nabla u(T,x)\|_{L^2(\Omega)}\le 
   \|\nabla u(0,x)\|_{L^2(\Omega)}+2C_f^{cont}C_\Omega, \quad \forall\  T>0,
\end{aligned}
\end{equation*}
which corresponds to our $H^1$-stability result in Theorem~\ref{thm2} for the L2-1$_\sigma$ scheme of the subdiffusion equation \eqref{eq:subdiffusion}.
\end{remark}

\begin{remark}
In the case of $\alpha =1$, i.e., the standard diffusion equation, the energy stability (or $H^1$-stability) has been established for the second order BDF2 schemes in \cite[Theorem 2.1]{liao2021analysis} and for the third order BDF3 schemes in \cite[Theorem 3.1]{liao2022discrete} on general nonuniform meshes. 
\end{remark}

\subsection{Sharp convergence of L2-1$_\sigma$ scheme for subdiffusion equation} 
We show the error estimate of the L2-1$_\sigma$ scheme \eqref{eq:sch_sub} for the subdiffusion equation \eqref{eq:subdiffusion}, that is different from the one in~\cite{liao2018second,liao2019discrete}.
To be precise we will reduce the restriction on time step ratios from $\rho_k\geq 4/7$ in~\cite{liao2018second} to $\rho_k\geq 0.475329$. 
We first reformulate the discrete fractional operator \eqref{eq:Lk1}:
\begin{equation*}
    L^{\alpha,*}_k u=\frac{1}{\Gamma(1-\alpha)}\left( [\mathbf M]_{k,k} u^k - \sum_{j=2}^k([\mathbf M]_{k,j}-[\mathbf M]_{k,j-1}) u^{j-1}-[\mathbf M]_{k,1}u^0\right),
\end{equation*}
where $\mathbf M$ is given by \eqref{matM}.
We now give some properties on $[\mathbf M]_{k,j}$.
\begin{lemma}\label{proMkj}
Under the condition \eqref{thm1cond}, the following properties of $[\mathbf M]_{k,j}$ given by \eqref{matM} hold:
\begin{itemize}
\item[{\rm (Q1)}] 
\begin{equation}\label{eq:4.9}
[\mathbf M]_{k,j}\ge \frac{\rho_*}{(1+\rho_*) \tau_j}\int_{t_{j-1}}^{\min\{t_j,t_k^*\}}(t_k^*-s)^{-\alpha}\,\ds,\quad 1\le j\le k.
\end{equation}
\item[{\rm (Q2)}] 
For all $2\le j\le k-1$, 
\begin{equation*}
   [\mathbf M]_{k,j}-[\mathbf M]_{k,j-1}\ge \frac{\alpha\tau_j}{\tau_j+\tau_{j+1}}
    \int_0^{1} (\tau_j+\tau_{j+1}-s\tau_j)(1-s)
    (t_k^*-t_{j-1}-s\tau_j)^{-\alpha-1}\,\ds,
\end{equation*}
and
\begin{equation*}
    [\mathbf M]_{k,k}-[\mathbf M]_{k,k-1}\ge\frac{\alpha}{2(1-\alpha)(\sigma\tau_{k})^{\alpha}}.
\end{equation*}
\item[{\rm (Q3)}]  Moreover, if $\rho_k\geq\eta\approx0.475329$ for all $k\geq 2$, then
\begin{equation*}
    \frac{1-\alpha}{\sigma}[\mathbf M]_{k,k}-[\mathbf M]_{k,k-1}\ge 0.
\end{equation*}
Here $\eta$  is the real root of $1-3\rho^2(1+\rho) = 0$.
\end{itemize}
\end{lemma}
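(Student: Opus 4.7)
My plan is to prove the three items in order, in each case exploiting the dual integral representations \eqref{eq:akj_rem} and \eqref{eq:akj1_rem} of $a_j^k$, the reformulation \eqref{eq:ckj_rem} of $c_j^k$, and the bounds already used in the proof of Theorem~\ref{thm1}. For (Q1) I would simply drop the nonnegative $2\tau_j(1-\theta)$ factor in the numerator of \eqref{eq:aj}, obtaining $-a_j^k\ge\frac{\rho_{j+1}}{(1+\rho_{j+1})\tau_j}\int_{t_{j-1}}^{t_j}(t_k^*-s)^{-\alpha}\,\ds$; since \eqref{thm1cond} forces $\rho_{j+1}>\rho_*$ and $x\mapsto x/(1+x)$ is increasing, \eqref{eq:4.9} follows in the three cases $j=1$ (where $[\mathbf M]_{k,1}=-a_1^k$), $2\le j\le k-1$ (where $[\mathbf M]_{k,j}=d_j^k\ge -a_j^k$ since $c_{j-1}^k>0$ by (P5)), and $j=k$ (where $[\mathbf M]_{k,k}\ge\sigma^{1-\alpha}/((1-\alpha)\tau_k^\alpha)=\tau_k^{-1}\int_{t_{k-1}}^{t_k^*}(t_k^*-s)^{-\alpha}\,\ds$ already beats the claim by the factor $(1+\rho_*)/\rho_*>1$).

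For (Q2), the structural observation is that if \eqref{eq:akj_rem} is applied to $a_{j-1}^k$ and \eqref{eq:akj1_rem} to $a_j^k$, then the boundary singularities $(t_k^*-t_{j-1})^{-\alpha}$ cancel, yielding $a_{j-1}^k-a_j^k=U_{j-1}^k+T_j^k$, where $T_j^k$ is precisely the claimed lower bound and $U_{j-1}^k>0$ is the tail from \eqref{eq:akj_rem}. For $j=2$ (with the convention $c_0^2=0$) this immediately gives $[\mathbf M]_{k,2}-[\mathbf M]_{k,1}=c_1^k+U_1^k+T_2^k\ge T_2^k$. For $3\le j\le k-1$ one is reduced to $c_{j-1}^k-c_{j-2}^k+U_{j-1}^k\ge 0$; I would use that $f(s):=(t_k^*-t_{j-1}+s\tau_{j-1})^{-\alpha-1}$ is decreasing, the balance identity $\int_0^1(1-3s)(1-s)\,\ds=0$ (exactly the trick from Case~3 of Theorem~\ref{thm1}) to bound $U_{j-1}^k$ below by $\frac{\alpha(4+3\rho_j)\tau_{j-1}}{1+\rho_j}\int_0^1 s(1-s)f(s)\,\ds$, and the monotonicity $(t_k^*-t_{j-2}+s\tau_{j-2})^{-\alpha-1}<f(s)$ to bound $c_{j-2}^k$ above by $\frac{\alpha\tau_{j-1}}{\rho_{j-1}^2(1+\rho_{j-1})}\int_0^1 s(1-s)f(s)\,\ds$; after factoring out the common integral, the residual polynomial inequality collapses via the algebraic identity $\frac{1}{\rho_j(1+\rho_j)}+\frac{4+3\rho_j}{1+\rho_j}=\frac{1}{\rho_j}+3$ to exactly \eqref{condition:rho} at index $j-1$. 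The case $j=k$ runs the same way; the leftover $-(\sigma\tau_k)^{-\alpha}$ combines with $\sigma^{1-\alpha}/((1-\alpha)\tau_k^\alpha)$ through $\sigma-(1-\alpha)=\alpha/2$ to produce the stated $\alpha/(2(1-\alpha)(\sigma\tau_k)^\alpha)$.

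For (Q3), the prefactor $\frac{1-\alpha}{\sigma}$ is tuned precisely so that $\frac{1-\alpha}{\sigma}\cdot\frac{\sigma^{1-\alpha}}{(1-\alpha)\tau_k^\alpha}=(\sigma\tau_k)^{-\alpha}$ cancels the $(\sigma\tau_k)^{-\alpha}$ pulled out of $a_{k-1}^k$ by \eqref{eq:akj_rem}, collapsing the expression to $\frac{1-\alpha}{\sigma}c_{k-1}^k+U_{k-1}^k-c_{k-2}^k$. Applying the same integral bounds as above and now using $\rho_{k-1}\ge\eta$ (so $\rho_{k-1}^2(1+\rho_{k-1})\ge 1/3$ by the defining equation $1-3\eta^2(1+\eta)=0$), the problem reduces to the pointwise inequality $\frac{1-\alpha}{\sigma\rho_k(1+\rho_k)}+\frac{4+3\rho_k}{1+\rho_k}-3=\frac{1}{1+\rho_k}\bigl(\frac{1-\alpha}{\sigma\rho_k}+1\bigr)>0$, which is manifest.

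The main obstacle I anticipate is the careful selection of \eqref{eq:akj_rem} versus \eqref{eq:akj1_rem} for each $a$-coefficient so that the dominant boundary singularities cancel at the right step, leaving a purely positive residual whose control reduces to the mesh identity $\frac{1}{\rho_j(1+\rho_j)}+\frac{4+3\rho_j}{1+\rho_j}=\frac{1}{\rho_j}+3$; once that bookkeeping is in place, both (Q2) and (Q3) follow from the same convexity-plus-$(1-3s)(1-s)$ estimate that drove Theorem~\ref{thm1}, the only difference being that (Q3) uses the strictly stronger assumption $\rho_{k-1}\ge\eta$ to absorb the factor $\frac{1-\alpha}{\sigma}<1$ on $c_{k-1}^k$.
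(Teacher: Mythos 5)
Your plan is correct and follows essentially the same route as the paper: bound $[\mathbf M]_{k,j}$ below by $-a_j^k$ via \eqref{eq:aj} for (Q1), pair \eqref{eq:akj_rem} for $a_{j-1}^k$ with \eqref{eq:akj1_rem} for $a_j^k$ so the $(t_k^*-t_{j-1})^{-\alpha}$ terms cancel, and control the remaining $c_{j-1}^k-c_{j-2}^k+U_{j-1}^k$ by the $(1-3s)(1-s)$ balance trick plus monotonicity of $t^{-\alpha-1}$, reducing to \eqref{condition:rho} at index $j-1$ for (Q2) and to $\rho_{k-1}^2(1+\rho_{k-1})\ge 1/3$ for (Q3), with the $\sigma-(1-\alpha)=\alpha/2$ cancellation giving the stated constants. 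Your explicit algebraic identity $\frac{1}{\rho(1+\rho)}+\frac{4+3\rho}{1+\rho}=\frac{1}{\rho}+3$ is exactly what the paper leaves implicit by referring back to \eqref{ineq:rhon}, so no genuine difference or gap remains.
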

\begin{proof}  
From \eqref{matM}, for $1\le j\le k-1$, 
\begin{equation}\label{ineq:mrho2}
     \begin{aligned}
      [\mathbf M]_{k,j}&\ge- a_j^n=\int_0^1 \frac{2 \tau_j(1-\theta)+\tau_{j+1}}{(\tau_{j}+\tau_{j+1})(t_k^*-(t_{j-1}+\theta \tau_j))^\alpha}\,\dtheta\\
      &\ge \frac{\rho_{j+1}}{1+\rho_{j+1}}\int_0^1 \frac{1}{(t_k^*-(t_{j-1}+\theta \tau_j))^\alpha}\,\dtheta
      \ge \frac{\rho_*}{(1+\rho_*)\tau_j}\int_{t_{j-1}}^{t_j}(t_k^*-s)^{-\alpha}\ds,
     \end{aligned}
\end{equation}
     and for $j=k$, 
     \begin{equation*}
          [\mathbf M]_{k,k}=c^k_{k-1}+\frac{\sigma^{1-\alpha}}{(1-\alpha)\tau_k^\alpha}\ge\frac{\sigma^{1-\alpha}}{(1-\alpha)\tau_k^\alpha}=\frac{1}{\tau_k}\int_{t_{k-1}}^{t_k^*}(t_k^*-s)^{-\alpha}\ds.
     \end{equation*}
     The inequality \eqref{eq:4.9} holds.

For $ 2\le j\le k-1$, according to \eqref{eq:akj_rem} -- \eqref{eq:ckj_rem},
\begin{align*}
   &[\mathbf M]_{k,j}-[\mathbf M]_{k,j-1}\\
   =&\frac{\alpha\tau_{j-1}^3}{\tau_{j}(\tau_{j-1}+\tau_{j})}\int_0^{1} s(1-s)(t_k^*-t_{j-1}+s\tau_{j-1})^{-\alpha-1}\  \ds\\
    &-\frac{\alpha\tau_{j-2}^3}{\tau_{j-1}(\tau_{j-2}+\tau_{j-1})}\int_0^{1} s(1-s)(t_k^*-t_{j-2}+s\tau_{j-2})^{-\alpha-1}\  \ds\\
    &+\frac{\alpha\tau_{j-1}}{\tau_{j-1}+\tau_{j}}
    \int_0^{1} (\tau_{j-1}+\tau_{j}+s\tau_{j-1})
    (1-s)(t_k^*-t_{j-1}+s\tau_{j-1})^{-\alpha-1}\,\ds
    \\
    &+\frac{\alpha\tau_{j}}{\tau_{j}+\tau_{j+1}}
     \int_0^{1}(\tau_{j}+\tau_{j+1}-s\tau_{j}) (1-s)(t_k^*-t_{j-1}-s\tau_{j})^{-\alpha-1}\,\ds\\
\ge&
   \frac{\alpha\tau_j}{\tau_j+\tau_{j+1}}
    \int_0^{1} (\tau_j+\tau_{j+1}-s\tau_j)(1-s)
    (t_k^*-t_{j-1}-s\tau_j)^{-\alpha-1}\,\ds,
\end{align*}
under the condition \eqref{thm1cond} (for simplicity we make a convention that $\tau_0 =0$).
Note that \eqref{thm1cond} indicates the sum of first three terms is positive, using the techniques in \eqref{ineq:rhon}.
When $j=k=2$, we obtain from \eqref{eq:akj_rem}
    \begin{equation*}\label{bkkvere0}
        [\mathbf M]_{2,2}-[\mathbf M]_{2,1}=c_1^2+\frac{\sigma^{1-\alpha}}{(1-\alpha)\tau_2^{\alpha}}+a_1^2
        \ge \frac{\sigma^{1-\alpha}}{(1-\alpha)\tau_2^{\alpha}} - \frac{1}{(\sigma\tau_2)^\alpha}
        =\frac{\alpha}{2(1-\alpha)(\sigma\tau_{2})^{\alpha}},
    \end{equation*}
    where we use the fact $\sigma = 1-\alpha/2$.
 Moreover when $j=k\geq 3$, we have
    \begin{align*}\label{bkkvere}
  & [\mathbf M]_{k,k}-[\mathbf M]_{k,k-1}
    \\=&\frac{\sigma^{1-\alpha}}{(1-\alpha)\tau_k^{\alpha}}+(c_{k-1}^k-c^k_{k-2}+a^k_{k-1})\nonumber\\
    =&\frac{\sigma^{1-\alpha}}{(1-\alpha)\tau_k^{\alpha}}
    +\bigg(\frac{\alpha\tau_{k-1}^3}{\tau_{k}(\tau_{k-1}+\tau_{k})}\int_0^{1} s(1-s)(t_k^*-t_{k-1}+s\tau_{k-1})^{-\alpha-1}\  \ds\nonumber\\
    &-\frac{\alpha\tau_{k-2}^3}{\tau_{k-1}(\tau_{k-2}+\tau_{k-1})}\int_0^{1} s(1-s)(t_k^*-t_{k-2}+s\tau_{k-2})^{-\alpha-1}\  \ds\nonumber\\
    & -(\sigma\tau_k)^{-\alpha}+\frac{\alpha\tau_{k-1}}{\tau_{k-1}+\tau_{k}}
    \int_0^{1} (\tau_{k-1}+\tau_{k}+s\tau_{k-1})
    (1-s)(t_k^*-t_{k-1}+s\tau_{k-1})^{-\alpha-1}\,\ds\bigg)\nonumber\\
     >&\frac{\sigma^{1-\alpha}}{(1-\alpha)\tau_k^{\alpha}} - \frac{1}{(\sigma\tau_k)^\alpha}= \frac{\alpha}{2(1-\alpha)(\sigma\tau_{k})^{\alpha}},\nonumber
    \end{align*}
    when the condition \eqref{thm1cond} holds.
    This inequality coincide with \eqref{ineq:rhon} by replacing $n$ with $k$.
    
    For the property (Q3),  the case of $k=2$ is not difficult to obtain. In the case of $k\ge 3$, we have 
 \begin{align*}
  &  \frac{1-\alpha}{\sigma}[\mathbf M]_{k,k}-[\mathbf M]_{k,k-1}
    \\\ge&(\sigma\tau_k)^{-\alpha}-c^k_{k-2}+a^k_{k-1} \\
   =&(\sigma\tau_k)^{-\alpha}
    -\frac{\alpha\tau_{k-2}^3}{\tau_{k-1}(\tau_{k-2}+\tau_{k-1})}\int_0^{1} s(1-s)(t_k^*-t_{k-2}+s\tau_{k-2})^{-\alpha-1}\  \ds\\
    & -(\sigma\tau_k)^{-\alpha}+\frac{\alpha\tau_{k-1}}{\tau_{k-1}+\tau_{k}}
    \int_0^{1} (\tau_{k-1}+\tau_{k}+s\tau_{k-1})
    (1-s)(t_k^*-t_{k-1}+s\tau_{k-1})^{-\alpha-1}\,\ds\\
     >&\alpha\left(\frac{\tau_{k-1}(4\tau_{k-1}+3\tau_{k})}{\tau_{k-1}+\tau_{k}}-\frac{\tau_{k-2}^3}{\tau_{k-1}(\tau_{k-2}+\tau_{k-1})}\right)\int_0^{1} s(1-s)(t_k^*-t_{k-1}+s\tau_{k-1})^{-\alpha-1}\  \ds\\
     \ge&0, 
    \end{align*}
where we use the facts 
\begin{align*}
    &\int_0^{1} (\tau_{k-1}+\tau_{k}+s\tau_{k-1})
    (1-s)(t_k^*-t_{k-1}+s\tau_{k-1})^{-\alpha-1}\,\ds\\\ge& (4\tau_{k-1}+3\tau_{k})\int_0^{1} s(1-s)(t_k^*-t_{k-1}+s\tau_{k-1})^{-\alpha-1}\  \ds,\\
    &(t_k^*-t_{k-1}+s\tau_{k-1})^{-\alpha-1}\ge (t_k^*-t_{k-2}+s\tau_{k-2})^{-\alpha-1},
\end{align*}
and 
\begin{equation*}
    \frac{\tau_{k-1}(4\tau_{k-1}+3\tau_{k})}{\tau_{k-1}+\tau_{k}}-\frac{\tau_{k-2}^3}{\tau_{k-1}(\tau_{k-2}+\tau_{k-1})}\ge0, 
\end{equation*}
when $\rho_k\geq \eta\approx0.475329$ for all $k\geq 2$.

\end{proof}

 Consider the following three standard Lagrange interpolation operators with the following interpolation  points:
 \begin{equation*}
     \Pi_{1,j}:{t_{j-1},t_j},\quad \Pi_{2,j}:{t_{j-1},t_j,t_{j+1}},\quad \Pi^*_{2,j}:{t_{j-1},t^*_j,t_j}.
\end{equation*}
As stated in~\cite{kopteva2020error}, when $\sigma=1-\alpha/2$, 
\begin{equation*}\label{eq:truncneed}
    \int_{t_k-1}^{t_k^*}(\Pi_{1,k}v-\Pi^*_{2,k}v)'(s)(t_k^*-s)^{-\alpha}\,\ds=0.
\end{equation*}
We now analyze the approximation error of the discrete fractional operator in the following lemma.
\begin{lemma}\label{lem4.3}
Given a function $u$ satisfying $|\partial_t^{m} u(t)|\le C_m(1+t^{\alpha-m})$ for $m=1,3$ and nonuniform mesh $\{\tau_k\}_{k\ge 1}$ satisfying condition \eqref{thm1cond}, the approximation error is given by 
\begin{equation}\label{eq:localrk}
r_k \coloneqq \frac{1}{\Gamma(1-\alpha)}\int _0^{t_k^*}(t_k^*-s)^{-\alpha}\partial_s[u(s)-I_2u(s)]\,\ds,\quad k\ge 1,
\end{equation} 
where $I_2 u=\Pi_{2,j}u$ on $(t_{j-1},t_j)$ for $j<k$ and $I_2 u=\Pi^*_{2,k}u$ on $(t_{k-1},t_k^*)$.
Then for $k\geq 1$,
\begin{equation}\label{eq:approxerror}
\begin{aligned}
   | r_k|\le \frac{C}{\Gamma(1-\alpha)}\bigg(  [\mathbf M]_{k,1}\big(t_2^\alpha/\alpha+t_2\big)+\sum_{j=2}^k([\mathbf M]_{k,j}-[\mathbf M]_{k,j-1})(1+\rho_{j+1})(1+t_{j-1}^{\alpha-3})\tau_j^3\bigg),
\end{aligned}
\end{equation}
where $C$ is a constant depending on $C_m$ for $m=1,3$ and $\rho_{k+1}=1$.
\end{lemma}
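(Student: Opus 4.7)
My plan is to decompose $r_k$ according to the subintervals of the mesh,
\[
r_k \;=\; \frac{1}{\Gamma(1-\alpha)}\sum_{j=1}^{k-1}\int_{t_{j-1}}^{t_j}(t_k^*-s)^{-\alpha}(u-\Pi_{2,j}u)'(s)\,\ds \;+\; \frac{1}{\Gamma(1-\alpha)}\int_{t_{k-1}}^{t_k^*}(t_k^*-s)^{-\alpha}(u-\Pi^*_{2,k}u)'(s)\,\ds,
\]
estimate each local contribution with the interpolation remainder appropriate to its subinterval, and then convert each weighted kernel integral into the matrix coefficients $[\mathbf M]_{k,\cdot}$ via Lemma~\ref{proMkj}. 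The three regimes $j=1$, $2\le j\le k-1$, and $j=k$ require different tools.

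For $2\le j\le k-1$ I would keep the full cubic three-point Lagrange remainder $e_j(s) = \frac{1}{6}\partial_t^3 u(\xi_s)(s-t_{j-1})(s-t_j)(s-t_{j+1})$ together with $|\partial_t^3 u(t)|\le C(1+t^{\alpha-3})$ on $[t_{j-1},t_{j+1}]$, so that $|e_j(s)|\le \tfrac{C}{6}(s-t_{j-1})(t_j-s)(t_{j+1}-s)(1+t_{j-1}^{\alpha-3})$. Since $e_j$ vanishes at both endpoints, integration by parts rewrites the local contribution as $-\frac{\alpha}{\Gamma(1-\alpha)}\int_{t_{j-1}}^{t_j}(t_k^*-s)^{-\alpha-1}e_j(s)\,\ds$. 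The key observation is that after the change of variables $s=t_{j-1}+\theta\tau_j$, the kernel $(t_j-s)(t_{j+1}-s)(t_k^*-s)^{-\alpha-1}$ matches exactly the integrand inside property (Q2), giving
\[
\int_{t_{j-1}}^{t_j}(t_j-s)(t_{j+1}-s)(t_k^*-s)^{-\alpha-1}\,\ds \;\le\; \frac{\tau_j(\tau_j+\tau_{j+1})}{\alpha}\bigl([\mathbf M]_{k,j}-[\mathbf M]_{k,j-1}\bigr),
\]
and bounding the leftover factor $(s-t_{j-1})\le\tau_j$ together with $\tau_j+\tau_{j+1}=(1+\rho_{j+1})\tau_j$ produces the stated $C\tau_j^3(1+\rho_{j+1})(1+t_{j-1}^{\alpha-3})([\mathbf M]_{k,j}-[\mathbf M]_{k,j-1})$. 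For $j=k$, the interpolant $\Pi^*_{2,k}u$ matches $u$ at $t_{k-1}$ and $t_k^*$, so the same IBP is legitimate (the boundary term at $s=t_k^*$ vanishes because $e_k(t_k^*)=0$ neutralises the $(t_k^*-s)^{-\alpha}$ singularity); an analogous direct computation bounds the local term by $C\tau_k^{3-\alpha}(1+t_{k-1}^{\alpha-3})$, and the second inequality of (Q2), $[\mathbf M]_{k,k}-[\mathbf M]_{k,k-1}\ge \alpha/[2(1-\alpha)(\sigma\tau_k)^\alpha]$, absorbs the extra $\tau_k^{-\alpha}$ into the coefficient, recovering the stated form with $\rho_{k+1}=1$ by convention.

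For $j=1$ the third derivative of $u$ is not integrable near $0$, so I would estimate $e_1'$ directly. Divided differences combined with $|\partial_t u(t)|\le C(1+t^{\alpha-1})$ give $|u[0,t_1]|\le C(1+t_1^{\alpha-1}/\alpha)$ and $t_2|u[0,t_1,t_2]|\le C(1+t_1^{\alpha-1}/\alpha)$, hence $|(\Pi_{2,1}u)'(s)|\le C(1+t_1^{\alpha-1}/\alpha)$ on $(0,t_1)$; property (Q1) then bounds the $(\Pi_{2,1}u)'$ contribution by $C(t_1+t_1^\alpha/\alpha)[\mathbf M]_{k,1}$. For the $u'(s)$ part I would split $\int_0^{t_1}(t_k^*-s)^{-\alpha}(1+s^{\alpha-1})\,\ds$ into its two components: the regular one is again controlled via (Q1), while the singular one is at most $(t_k^*-t_1)^{-\alpha}t_1^\alpha/\alpha$, and the mesh constraint $\rho_j\ge\rho_*$ makes both $t_k^*/(t_k^*-t_1)$ and $t_2/t_1$ universally bounded, so $(t_k^*-t_1)^{-\alpha}t_1^\alpha\le C(t_k^*)^{-\alpha}t_2^\alpha$; the crude lower bound $[\mathbf M]_{k,1}\ge(t_k^*)^{-\alpha}$, read off from representation \eqref{eq:akj1_rem}, then closes the estimate. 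The main obstacle lies precisely here: a straight IBP on $(0,t_1)$ would instead produce the weight $(t_k^*-t_1)^{-\alpha}-(t_k^*)^{-\alpha}$, which cannot be absolutely dominated by $[\mathbf M]_{k,1}$, so the direct estimate of $e_1'$ together with the mesh constraint $\rho_j\ge\rho_*$ appears to be the only route to the clean bound $C(t_2+t_2^\alpha/\alpha)[\mathbf M]_{k,1}$. For the middle intervals, the parallel pitfall is coarsening the cubic factor $(s-t_{j-1})(t_{j+1}-s)$ into $\tau_j^2(1+\rho_{j+1})$ too early, which would yield a suboptimal $(1+\rho_{j+1})^2$ rather than the linear $(1+\rho_{j+1})$ in the bound.
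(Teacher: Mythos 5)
Your proposal is correct and follows essentially the same route as the paper's proof: split $r_k$ over the mesh subintervals, treat $(0,t_1)$ by direct bounds on $u'$ and $(\Pi_{2,1}u)'$ (no integration by parts) combined with a lower bound on $[\mathbf M]_{k,1}$, and on the middle and final intervals integrate the interpolation remainder by parts and absorb the resulting kernel integrals through properties (Q1)--(Q2) of Lemma~\ref{proMkj}, keeping the factor $(1+\rho_{j+1})$ linear exactly as in the paper. Your first-interval bookkeeping (the bound $[\mathbf M]_{k,1}\ge (t_k^*)^{-\alpha}$ read from \eqref{eq:akj1_rem} together with the $\rho_*$-controlled ratio $t_k^*/(t_k^*-t_1)$) is only a cosmetic variant of the chain of inequalities the paper uses after \eqref{eq:intervalt1}.
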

 \begin{proof}
The case of $k=1$ is not difficult to prove. We now consider the case of $k\geq 2$.
 Let $\chi(s) \coloneqq u-I_2u$. 
 Three subcases are discussed in the following content. 
 
{\bf Subcase 1.} On the interval $(t_0,t_1)$, we have
 \begin{equation*}
    \partial_s I_2u (s) =  \frac{2s -t_1-t_{2}}{\tau_{1}(\tau_{1}+\tau_{2})} u(t_0) -  \frac{2s -t_{2}}{\tau_{1}\tau_{2} }u(t_1)
     + \frac{2s -t_{1}}{\tau_{2}(\tau_{1}+\tau_{2})} u(t_2)
 \end{equation*}
that is linear w.r.t. $s$.
Then we have
 \begin{align*}
     |\partial_s I_2u (s)|& \leq \max\{|\partial_s I_2u (t_0)|,|\partial_s I_2u (t_1)|\}
      \leq C_1\frac{ 1+\rho_2}{\tau_1\rho_{2}}(t_2+t_2^\alpha/\alpha),
     \end{align*}
 where we use the facts
  \begin{align*}
       \partial_s I_2u (t_0) &= - \frac{ 2\tau_1+\tau_2}{\tau_{1}(\tau_{1}+\tau_{2})} u(t_0) +  \frac{ \tau_1+\tau_2}{\tau_{1}\tau_{2} }u(t_1)
     - \frac{\tau_1}{\tau_{2}(\tau_{1}+\tau_{2})} u(t_2) \\
    &=-\frac{ 2\tau_1+\tau_2}{\tau_{1}(\tau_{1}+\tau_{2})} (u(t_0)-u(t_1) )
     +\frac{\tau_1}{\tau_{2}(\tau_{1}+\tau_{2})} (u(t_1)-u(t_2)) \\
       &\le\left(\frac{ 2\tau_1+\tau_2}{\tau_{1}(\tau_{1}+\tau_{2})}+\frac{\tau_1}{\tau_{2}(\tau_{1}+\tau_{2})} \right) \max\{|u(t_0)-u(t_1)|,|u(t_1)-u(t_2)|\}
      \\
      &=\frac{ \tau_1+\tau_2}{\tau_{1}\tau_{2}} \max\{|u(t_0)-u(t_1)|,|u(t_1)-u(t_2)|\},
      \\
     \partial_s I_2u (t_1)&=-\frac{\tau_2}{\tau_{1}(\tau_{1}+\tau_{2})} u(t_0) -  \frac{\tau_1-\tau_2}{\tau_{1}\tau_{2} }u(t_1)
     + \frac{\tau_1}{\tau_{2}(\tau_{1}+\tau_{2})} u(t_2)\\
     &=-\frac{\tau_2}{\tau_{1}(\tau_{1}+\tau_{2})} (u(t_0) -u(t_1))
     - \frac{\tau_1}{\tau_{2}(\tau_{1}+\tau_{2})}( u(t_1)-u(t_2))\\
     &\le \left(\frac{\tau_2}{\tau_{1}(\tau_{1}+\tau_{2})}+\frac{\tau_1}{\tau_{2}(\tau_{1}+\tau_{2})}\right)\max\{|u(t_0)-u(t_1)|,|u(t_1)-u(t_2)|\}
     \\& =\frac{\tau_1^2+\tau_2^2}{\tau_{1}\tau_2(\tau_{1}+\tau_{2})}\max\{|u(t_0)-u(t_1)|,|u(t_1)-u(t_2)|\},\\
      |u(t_0)-u(t_1)|&=|\int_0^{t_1}\partial_s u(s)\,\ds|\le C_1(\tau_1+\tau_1^\alpha/\alpha),\\
      |u(t_1)-u(t_2)|&=|\int_{t_1}^{t_2}\partial_s u(s)\,\ds|\le C_1(\tau_2+(t_2^\alpha-t_1^\alpha)/\alpha).
\end{align*}
Therefore, we have
  \begin{equation*}
     |\partial_s\chi(s)|\le |\partial_s u|+|\partial_s I_2u|
     \leq  C_1\left(s^{\alpha-1}+ 1+\frac{ 1+\rho_2}{\tau_1\rho_{2}}(t_2+t_2^\alpha/\alpha)\right),
 \end{equation*}
which yields
 \begin{equation}\label{eq:intervalt1}
    \begin{aligned}
  &| \frac{1}{\Gamma(1-\alpha)}\int _0^{t_1}(t^*_k-s)^{-\alpha}\partial_s\chi(s)\,\ds|\\
  \le&   \frac{C_1}{\Gamma(1-\alpha)}\left(\int _0^{t_1}s^{\alpha-1}(t_k^*-s)^{-\alpha}\,\ds+\frac{\tau_1+(1+\rho_2)/\rho_2(t_2+t_2^\alpha/\alpha)}{\tau_1}\int _0^{t_1}(t_k^*-s)^{-\alpha}\,\ds\right)\\
  \le& \frac{C_1}{\Gamma(1-\alpha)}\left(\frac{\tau_1^\alpha}{\alpha(t_k^*-\tau_1)^\alpha}+\frac{\tau_1+(1+\rho_2)/\rho_2(t_2+t_2^\alpha/\alpha)}{\tau_1}\int _0^{t_1}(t_k^*-s)^{-\alpha}\,\ds\right)\\
  \le&\frac{C(t_2^\alpha/\alpha+t_2)}{\Gamma(1-\alpha)}[\mathbf M]_{k,1},
    \end{aligned}
\end{equation}
where $C$ is an  absolute constant only depending on $C_1$.
In the last inequality of \eqref{eq:intervalt1}, we use the fact
\begin{align*}
    [\mathbf M]_{k,1}&\ge\frac{\rho_2}{(1+\rho_2)\tau_1}\int _0^{t_1}(t_k^*-s)^{-\alpha}\,\ds\ge \frac{\rho_2}{(1+\rho_2)(t_k^*)^\alpha}\\&\ge\frac{\rho_2^{1+\alpha}}{(1+\rho_2)(2+\rho_2)^\alpha(t_k^*-\tau_1)^\alpha}\ge\frac{\rho_*^{1+\alpha}}{(1+\rho_*)(2+\rho_*)^\alpha(t_k^*-\tau_1)^\alpha}
\end{align*}
obtained from the inequality \eqref{ineq:mrho2}.

{\bf Subcase 2.}
On the interval $(t_{j-1},t_j)$, $2\le j\le k-1$,
\begin{equation*}
\begin{aligned}
    |\chi(s)|&=|\frac{u^{(3)}(\xi)}{6}(s-t_{j-1})(s-t_{j})(s-t_{j+1})|\
    \le C_3(1+t_{j-1}^{\alpha-3})(s-t_{j-1})(s-t_{j})(s-t_{j+1}),
\end{aligned}
\end{equation*}
where $\xi \in (t_{j-1},t_{j+1})$. 
Then we have 
\begin{align}\label{eq:intervaltj}
&| \frac{1}{\Gamma(1-\alpha)}\int _{t_{j-1}}^{t_j}(t_k^*-s)^{-\alpha}\partial_s\chi(s)\,\ds|=
|\frac{-\alpha}{\Gamma(1-\alpha)}\int _{t_{j-1}}^{t_j}(t_k^*-s)^{-\alpha-1}\chi(s)\,\ds|\\\nonumber
\le&\frac{C_3\alpha (1+t_{j-1}^{\alpha-3})}{\Gamma(1-\alpha)}\int _{t_{j-1}}^{t_j}(t_k^*-s)^{-\alpha-1}(s-t_{j-1})(s-t_{j})(s-t_{j+1})\,\ds  \\\nonumber
=&\frac{C_3\alpha  (1+t_{j-1}^{\alpha-3})\tau^3_j}{\Gamma(1-\alpha)}\int_0^1s(\tau_j+\tau_{j+1}-s\tau_j)(1-s)(t_k^*-t_{j-1}-s\tau_j)^{-\alpha-1}\,\ds\\\nonumber
\le &\frac{C_3(1+\rho_{j+1}) (1+t_{j-1}^{\alpha-3})\tau^3_j}{\Gamma(1-\alpha)} ([\mathbf M]_{k,j}-[\mathbf M]_{k,j-1}),\nonumber
\end{align}
from (Q2) in Lemma~\ref{proMkj}.

{\bf Subcase 3.}
On the interval $(t_{k-1},t_k^*)$, 
\begin{align*}
      |\chi(s)|\le& C_3(1+t_{k-1}^{\alpha-3})(s-t_{k-1})(t_k^*-s) (t_k-s)
    \le C_3(1+t_{k-1}^{\alpha-3})\tau_k^2(t_k^*-s), 
\end{align*}
which yields
\begin{equation}\label{eq:intervaltk}
\begin{aligned}
&| \frac{1}{\Gamma(1-\alpha)}\int _{t_{k-1}}^{t_k^*}(t_k^*-s)^{-\alpha}\partial_s\chi(s)\,\ds|=|\frac{-\alpha}{\Gamma(1-\alpha)}\int _{t_{k-1}}^{t_k^*}(t_k^*-s)^{-\alpha-1}\chi(s)\,\ds|\\
\le&\frac{C_3\alpha(1+ t_{k-1}^{\alpha-3})\tau_k^2}{\Gamma(1-\alpha)}\int _{t_{k-1}}^{t_k^*}(t_k^*-s)^{-\alpha}\,\ds  =\frac{2C_3\sigma(1+ t_{k-1}^{\alpha-3})\tau^3_k }{\Gamma(1-\alpha)} \frac{\alpha}{2(1-\alpha)(\sigma\tau_k)^\alpha}\\
\le &\frac{2 C_3\sigma (1+t_{k-1}^{\alpha-3})\tau^3_k}{\Gamma(1-\alpha)} ([\mathbf M]_{k,k}-[\mathbf M]_{k,k-1})
\end{aligned}
\end{equation}
from (Q2) in Lemma~\ref{proMkj}.

Combining \eqref{eq:intervalt1}, \eqref{eq:intervaltj} and \eqref{eq:intervaltk} we obtain the estimation \eqref{eq:approxerror} of approximation error. 
 \end{proof}

\begin{theorem}\label{thm:err}
Assume that $u\in C^3((0,T],H^1_0(\Omega))$ and $|\partial_t^{m} u(t)|\le C_m(1+t^{\alpha-m})$, for $m=1,2,3$ for $0< t\le T$. If the nonuniform mesh satisfies $\rho_k\geq \eta\approx0.475329$, then the numerical solutions of L2-1$_\sigma$ scheme \eqref{eq:sch_sub} have the following global  error estimate
\begin{align*}
  &\max_{1\le k\le n}  \|u(t_k)-u^k\|_{L^2(\Omega)}\\\le  &C\bigg(t_2^\alpha/\alpha+t_2+\frac{1}{1-\alpha}\max_{2\le k\le n}(1+\rho_{k+1}) (1+t_{k-1}^{\alpha-3})(t_{k-1}^*)^\alpha\tau_k^3\tau_{k-1}^{-\alpha}\\
  &+(\tau_1^\alpha/\alpha+\tau_1)\tau_1^{\alpha/2}+\sqrt{\Gamma(1-\alpha)}\max_{2\le k\le n}(t_k^*)^{\alpha/2}(1+ t_{k-1}^{\alpha-2})\tau_k^2\bigg),
\end{align*}
where $C$ is a constant depending only on $C_m$, $m=1,2,3$ and $\Omega$.
\end{theorem}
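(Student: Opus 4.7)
The plan is to derive an error equation, estimate its truncation $R_k$ via Lemma~\ref{lem4.3} together with a Taylor expansion, and then invert the discrete fractional operator through a discrete fractional Gronwall argument calibrated to the matrix properties (Q1)--(Q3) of Lemma~\ref{proMkj}; the relaxed ratio threshold $\rho_k\ge\eta$ enters only through (Q3). Setting $e^k:=u(t_k)-u^k$ with $e^0=0$, the scheme \eqref{eq:sch_sub} yields
\begin{equation*}
L_k^{\alpha,*} e = (1-\alpha/2)\Delta e^k + (\alpha/2)\Delta e^{k-1} + R_k, \qquad R_k := r_k + \tilde r_k,
\end{equation*}
where $r_k$ is the fractional-operator residual of Lemma~\ref{lem4.3} and $\tilde r_k := \Delta\bigl((1-\alpha/2)u(t_k)+(\alpha/2)u(t_{k-1})-u(t_k^*)\bigr)$ is the Taylor residual of the $\sigma$-weighted Laplacian sampling, for which the bound $|\partial_t^2 u|\le C_2(1+t^{\alpha-2})$ gives $\|\tilde r_k\|_{L^2(\Omega)}\le C(1+t_{k-1}^{\alpha-2})\tau_k^2$.

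Next I test the error equation against $e^k$ in $L^2(\Omega)$, integrate by parts on the Laplacian terms, and absorb the cross term $(\alpha/2)\langle\nabla e^{k-1},\nabla e^k\rangle$ into the diagonal by property (Q3) of Lemma~\ref{proMkj}, which supplies $\tfrac{1-\alpha}{\sigma}[\mathbf M]_{k,k}\ge [\mathbf M]_{k,k-1}$ precisely under $\rho_k\ge\eta$. Combined with the matrix representation
\begin{equation*}
L_k^{\alpha,*} e = \frac{1}{\Gamma(1-\alpha)}\Bigl([\mathbf M]_{k,k}\, e^k - \sum_{j=2}^k ([\mathbf M]_{k,j}-[\mathbf M]_{k,j-1})\, e^{j-1}\Bigr),
\end{equation*}
Cauchy--Schwarz then yields the scalar linear recursion
\begin{equation*}
[\mathbf M]_{k,k}\,\|e^k\|_{L^2(\Omega)} \le \sum_{j=2}^k ([\mathbf M]_{k,j}-[\mathbf M]_{k,j-1})\,\|e^{j-1}\|_{L^2(\Omega)} + \Gamma(1-\alpha)\,\|R_k\|_{L^2(\Omega)}.
\end{equation*}

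Finally I introduce complementary discrete convolution kernels $\{P_{n-k}^{(n)}\}$ associated with $\mathbf M$ in the spirit of \cite{liao2018second,liao2019discrete}, satisfying the duality $\Gamma(1-\alpha)\sum_{k=j}^n P_{n-k}^{(n)}([\mathbf M]_{k,j}-[\mathbf M]_{k,j-1})\le 1$ and the summability $\sum_{k=1}^n P_{n-k}^{(n)}\le C(t_n^*)^\alpha/\Gamma(1+\alpha)$. Weighting the recursion by $P_{n-k}^{(n)}$ and summing telescopes to $\|e^n\|_{L^2(\Omega)} \le \Gamma(1-\alpha)\sum_{k=1}^n P_{n-k}^{(n)}\|R_k\|_{L^2(\Omega)}$. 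Substituting the Lemma~\ref{lem4.3} bound for $r_k$ and swapping the order of summation via the duality collapses the inner sum to pointwise maxima: the $j=1$ contribution, handled separately through (Q1), produces the $t_2^\alpha/\alpha+t_2$ and $(\tau_1^\alpha/\alpha+\tau_1)\tau_1^{\alpha/2}$ terms, the $j\ge 2$ contribution yields the $(1-\alpha)^{-1}\max_k\bigl[(1+\rho_{k+1})(1+t_{k-1}^{\alpha-3})(t_{k-1}^*)^\alpha\tau_k^3\tau_{k-1}^{-\alpha}\bigr]$ term, and bounding the $\tilde r_k$ contribution by a weighted Cauchy--Schwarz gives the remaining $\sqrt{\Gamma(1-\alpha)}(t_k^*)^{\alpha/2}(1+t_{k-1}^{\alpha-2})\tau_k^2$ term. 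The main obstacle is verifying the two kernel estimates for $\{P_{n-k}^{(n)}\}$ under the relaxed constraint $\rho_k\ge\eta\approx0.475329$ rather than the stronger $\rho_k\ge 4/7$ of \cite{liao2018second}: property (Q3) of Lemma~\ref{proMkj} is the precise algebraic replacement for the off-diagonal comparisons used there, and aligning its scaling with the diffusion absorption is what dictates the appearance of the $\sqrt{\Gamma(1-\alpha)}$ factor in the final bound.
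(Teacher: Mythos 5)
Your overall architecture (error equation, truncation bounds from Lemma~\ref{lem4.3} plus a Taylor remainder for the $\sigma$-weighted sampling, and inversion of the time-stepping matrix through the complementary kernel $\mathbf P$ with $\mathbf P\mathbf M=\mathbf E_{\rm L}$) matches the paper, but two steps as written would fail. First, you test the error equation with $e^k$ and claim the cross term $(\alpha/2)\langle\nabla e^{k-1},\nabla e^k\rangle$ is ``absorbed into the diagonal by (Q3)''. Property (Q3) is a statement about the entries $[\mathbf M]_{k,k}$, $[\mathbf M]_{k,k-1}$ of the fractional-derivative matrix and says nothing about the gradient cross term coming from the $\sigma$-weighted Laplacian; testing with $e^k$ leaves a genuinely sign-indefinite term $-(1-\alpha/2)\|\nabla e^k\|^2-(\alpha/2)\langle\nabla e^{k-1},\nabla e^k\rangle$ whose Young-inequality treatment produces a non-telescoping $+\|\nabla e^{k-1}\|^2$ contribution that you cannot discard. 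The paper instead tests with the weighted error $e_k^*=(1-\alpha/2)e^k+(\alpha/2)e^{k-1}$, so the diffusion term is exactly $-\|\nabla e_k^*\|^2\le 0$, and (Q3) enters precisely as the hypothesis $\tfrac{1-\alpha}{\sigma}[\mathbf M]_{k,k}\ge[\mathbf M]_{k,k-1}$ of Alikhanov's lemma, which yields $\langle L_k^{\alpha,*}e,e_k^*\rangle\ge\tfrac{1}{2\Gamma(1-\alpha)}\sum_j[\mathbf M]_{k,j}(\|e^j\|^2-\|e^{j-1}\|^2)$; your scalar recursion in $\|e^k\|$ is not derived and does not follow from testing with $e^k$.

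Second, you move the Taylor remainder into the right-hand side as $\tilde r_k=\Delta\bigl((1-\alpha/2)u(t_k)+(\alpha/2)u(t_{k-1})-u(t_k^*)\bigr)$ and bound $\|\tilde r_k\|_{L^2}$ using only $|\partial_t^2u|\le C_2(1+t^{\alpha-2})$; that bound requires control of $\Delta\partial_t^2 u$ in $L^2$, which the hypotheses ($u\in C^3((0,T],H^1_0(\Omega))$ with temporal-derivative bounds) do not provide. The paper keeps this term at the $H^1$ level, pairing $\nabla R_k^*$ with $\nabla e_k^*$ and absorbing the latter into $-\|\nabla e_k^*\|^2$, which is also why $R^*$ enters quadratically and the $\sqrt{\Gamma(1-\alpha)}$ in the final estimate arises from taking a square root of $\Gamma(1-\alpha)\sum_l[\mathbf P]_{k,l}\|R_l^*\|_{H^1}^2$ — not from ``aligning the scaling of (Q3) with the diffusion absorption''. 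Finally, the kernel summability $\sum_k P_{n-k}^{(n)}\le C(t_n^*)^\alpha/\Gamma(1+\alpha)$ that you flag as the main obstacle is not needed at all: the paper uses only the exact duality $\sum_{l=j}^k[\mathbf P]_{k,l}[\mathbf M]_{l,j}=1$, nonnegativity of $\mathbf P$ (from the monotonicity (Q2)), and the bounds \eqref{eq:mjjmj1} on $[\mathbf M]_{j,j}/[\mathbf M]_{j,1}$ and $1/[\mathbf M]_{l,2}$ coming from (Q1), so the relaxed threshold $\rho_k\ge\eta$ enters only through (Q2)--(Q3), with no new kernel estimate to verify.
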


\begin{proof} Let  $e^k\coloneqq
u(t_k)-u^k$. We have
\begin{equation}\label{eq:errorfun1}
     L_k^{\alpha,*} e 
    = \Delta e_k^*-r_k+\Delta R_k^*,
\end{equation}
where $ e_k^*\coloneqq (1-\alpha/2)e^k+\alpha/2 e^{k-1}$, $r_k$ is given in  \eqref{eq:localrk}, and $R_k^* \coloneqq u(t_k^*)-((1-\alpha/2)u(t_k)+\alpha/2 u(t_{k-1}))$.
Multiplying \eqref{eq:errorfun1} with $e_k^*$ and integrating over $\Omega$ yield 
\begin{equation}\label{eq:errorfun}
\begin{aligned}
    \langle L_k^{\alpha,*} e ,e_k^*\rangle
    &= -\|\nabla e_k^*\|_{L^2(\Omega)}^2-\langle r_k,e_k^*\rangle-\langle\nabla R_k^*,\nabla e_k^*\rangle.
\end{aligned}
\end{equation}
According to  \cite[Lemma 1]{alikhanov2015new} as well as Lemma~\ref{proMkj}, we can derive
\begin{equation*}
\begin{aligned}
     \langle L_k^{\alpha,*} e ,e_k^*\rangle 
     & = \frac{1}{\Gamma(1-\alpha)}\sum_{j=1}^k [\mathbf M]_{k,j}\langle (e^j-e^{j-1}), (1-\alpha/2)e^k+\alpha/2 e^{k-1} \rangle \\
     & \ge \frac{1}{2\Gamma(1-\alpha)}\sum_{j=1}^k [\mathbf M]_{k,j}\left(\|e^j\|_{L^2(\Omega)}^2-\|e^{j-1}\|_{L^2(\Omega)}^2\right).
\end{aligned}
\end{equation*}
Applying Cauchy-Schwarz inequality in \eqref{eq:errorfun} yields
\begin{equation}\label{eq:4.24}
\begin{aligned}
    & \sum_{j=1}^k [\mathbf M]_{k,j}\left(\|e^j\|_{L^2(\Omega)}^2-\|e^{j-1}\|_{L^2(\Omega)}^2\right)\\
    \le & 2\Gamma(1-\alpha)\|r_k\|_{L^2(\Omega)}\|e_k^*\|_{L^2(\Omega)}+\Gamma(1-\alpha)\| R_k^*\|_{H^1(\Omega)}^2.
    \end{aligned}
\end{equation}

We define a lower triangular $\mathbf P$ matrix such that
\begin{equation*}
    \mathbf P \mathbf M = \mathbf E_{\mathrm L}
\end{equation*}
where 
\begin{equation*}
\mathbf E_{\rm L}=
\begin{pmatrix}
1 & \\
1&1 \\
\vdots& \vdots & \ddots  \\
1& 1&\cdots& 1
\end{pmatrix}.
\end{equation*}
In other words,
\begin{equation*}
    \sum_{l=j}^k[\mathbf P]_{k,l} [\mathbf M]_{l,j}=1,\quad \forall 1\le j\le k\le n.
\end{equation*}
Here $\mathbf P$ is called complementary discrete convolution kernel in the work~\cite{liao2019discrete}. 
It can be easily checked that $[\mathbf P]_{k,l}\geq 0$ due to the monotonicity properties of $\mathbf M$. From \eqref{eq:4.24} we can derive that $\forall 1\leq k\leq n,$
\begin{equation}\label{eq:errormax}
\begin{aligned}
    &\|e^k\|_{L^2(\Omega)}^2\\
    \le & 2 \Gamma(1-\alpha)\sum_{l=1}^k [\mathbf P]_{k,l}\|r_l\|_{L^2(\Omega)}\|e_l^*\|_{L^2(\Omega)}
    +\Gamma(1-\alpha)\sum_{l=1}^k [\mathbf P]_{k,l}\| R_l^*\|_{H^1(\Omega)}^2\\
    \le & 2 \Gamma(1-\alpha) \left(\max_{1\leq l\leq k} \|e_l^*\|_{L^2(\Omega)}\right)\sum_{l=1}^k [\mathbf P]_{k,l}\|r_l\|_{L^2(\Omega)}
    +\Gamma(1-\alpha)\sum_{l=1}^k [\mathbf P]_{k,l}\| R_l^*\|_{H^1(\Omega)}^2,
\end{aligned}    
\end{equation}
where we use 
\begin{equation*}
\begin{aligned}
    &\sum_{l=1}^k [\mathbf P]_{k,l}\sum_{j=1}^l[\mathbf M]_{l,j}\left(\|e^j\|_{L^2(\Omega)}^2-\|e^{j-1}\|_{L^2(\Omega)}^2\right)\\=&\sum_{j=1}^k\left(\|e^j\|_{L^2(\Omega)}^2-\|e^{j-1}\|_{L^2(\Omega)}^2\right)\sum_{l=j}^k [\mathbf P]_{k,l}[\mathbf M]_{l,j}\\=&\sum_{j=1}^k\left(\|e^j\|_{L^2(\Omega)}^2-\|e^{j-1}\|_{L^2(\Omega)}^2\right)=\|e^k\|_{L^2(\Omega)}^2.
\end{aligned}
\end{equation*}
According to Lemma~\ref{lem4.3},
\begin{align*}
     &\Gamma(1-\alpha)\sum_{l=1}^k[\mathbf P]_{k,l}\|r_l\|\\
     &\le C |\Omega|\sum_{l=1}^k[\mathbf P]_{k,l}\bigg([\mathbf M]_{l,1}(t_2^\alpha/\alpha+t_2)+\sum_{j=2}^l([\mathbf M]_{l,j}-[\mathbf M]_{l,j-1})(1+\rho_{j+1})(1+t_{j-1}^{\alpha-3})\tau_j^3\bigg)\\
     &=C |\Omega| \left((t_2^\alpha/\alpha+t_2)+\sum_{j=2}^k(1+\rho_{j+1})(1+ t_{j-1}^{\alpha-3})\tau_j^3\sum_{l=j}^k [\mathbf P]_{k,l}([\mathbf M]_{l,j}-[\mathbf M]_{l,j-1})\right)\\
    &=C|\Omega|\left((t_2^\alpha/\alpha+t_2)+\sum_{j=2}^k (1+\rho_{j+1})(1+ t_{j-1}^{\alpha-3})\tau_j^3[\mathbf P]_{k,j-1}[\mathbf M]_{j-1,j-1}\right)\\
    &=C|\Omega|\left((t_2^\alpha/\alpha+t_2)+ \sum_{j=2}^{k}[\mathbf P]_{k,j-1}[\mathbf M]_{j-1,1}\frac{[\mathbf M]_{j-1,j-1}}{[\mathbf M]_{j-1,1}}(1+\rho_{j+1})(1+ t_{j-1}^{\alpha-3})\tau_{j}^3\right)\\
    & \le C|\Omega|\left((t_2^\alpha/\alpha+t_2)+ \max_{2\le j\le k}\frac{[\mathbf M]_{j-1,j-1}}{[\mathbf M]_{j-1,1}}(1+\rho_{j+1})(1+ t_{j-1}^{\alpha-3})\tau_{j}^3\right)\\
        &\le C|\Omega|\left((t_2^\alpha/\alpha+t_2)+\frac{1}{1-\alpha}\max_{2\le j\le k}(1+\rho_{j+1}) (1+t_{j-1}^{\alpha-3})(t_{j-1}^*)^\alpha\tau_j^3\tau_{j-1}^{-\alpha}\right),
\end{align*}
where $C$ is a constant only depending on $C_m$. 
The last inequality is obtained by the following upper bound of $[\mathbf M]_{j,j}$ and  lower bound of $[\mathbf M]_{j,1}$:
\begin{equation}\label{eq:mjjmj1}
\begin{aligned}
        [\mathbf M]_{j,j}&=c_{j-1}^j+\frac{\sigma^{1-\alpha}}{(1-\alpha)\tau_j^\alpha}\\&=\int_0^1 \frac{\tau_{j-1}^2(2\theta-1)}{\tau_{j}(\tau_{j-1}+\tau_{j})(t_j^*-(t_{j-2}+\theta \tau_{j-1}))^\alpha}\,\dtheta+\frac{\sigma^{1-\alpha}}{(1-\alpha)\tau_j^\alpha}\\
        &\le \frac{1}{\rho_j(1+\rho_j)(\sigma\tau_j)^\alpha}+\frac{\sigma^{1-\alpha}}{(1-\alpha)\tau_j^\alpha}\le \frac{1}{\eta(1+\eta)(\sigma\tau_j)^\alpha}+\frac{\sigma^{1-\alpha}}{(1-\alpha)\tau_j^\alpha},\\
     [\mathbf M]_{j,1}&\ge \frac{\eta}{(1+\eta)\tau_1}\int_0^{t_1}(t_j^*-s)^{-\alpha}\ds\ge\frac{\eta}{(1+\eta)(t_j^*)^\alpha},
\end{aligned}
\end{equation}
where we use (Q1) in Lemma~\ref{proMkj} for the inequality of $[\mathbf M]_{j,1}$.

Using the Taylor formula with integral remainder for $R_j^*$ gives
\begin{equation*}
    R_j^*=-\alpha/2\int_{t_{j-1}}^{t_j^*}(s-t_{j-1})u''(s)\,\ds -(1-\alpha/2)\int_{t_j^*}^{t_j}(t_j-s)u''(s)\,\ds,\quad 1\le j\le k.
\end{equation*}
Under the regularity assumption, we have
\begin{equation*}
   \| R_1^*\|_{H^1(\Omega)}\le C(\tau_1^\alpha/\alpha+\tau_1), \quad \| R_j^*\|_{H^1(\Omega)}\le C(1+ t_{j-1}^{\alpha-2})\tau_j^2, \,\quad2\le j\le k.
\end{equation*}
Then we have 
\begin{align*}
    &\sum_{l=1}^k [\mathbf P]_{k,l}\| R_l^*\|_{H^1(\Omega)}^2\\\le &C\left([\mathbf P]_{k,1}[\mathbf M]_{1,1}\frac{1}{[\mathbf M]_{1,1}}(\tau_1^\alpha/\alpha+\tau_1)^2+\sum_{l=2}^k [\mathbf P]_{k,l}[\mathbf M]_{l,2}\frac{1}{[\mathbf M]_{l,2}}\left((1+ t_{l-1}^{\alpha-2})\tau_l^2\right)^2\right)\\
    \le& C\left(\frac{1}{[\mathbf M]_{1,1}}(\tau_1^\alpha/\alpha+\tau_1)^2+\max_{2\le l\le k}\frac{1}{[\mathbf M]_{l,2}}\left((1+ t_{l-1}^{\alpha-2})\tau_l^2\right)^2\right)\\
    \le& C\left((1-\alpha)\tau_1^\alpha(\tau_1^\alpha/\alpha+\tau_1)^2+\max_{2\le l\le k}(t_l^*)^\alpha((1+ t_{l-1}^{\alpha-2})\tau_l^2)^2\right),
\end{align*}
where we use $[\mathbf M]_{l,2}\ge [\mathbf M]_{l,1}$ and \eqref{eq:mjjmj1}.

Taking the max for $1\le k \le n$ on both sides of \eqref{eq:errormax},  we can derive 
\begin{equation}\label{eq:maxe}
\begin{aligned}
  \max_{1\le k\le n}  \|e_k\|_{L^2(\Omega)}\le& C\bigg((t_2^\alpha/\alpha+t_2)+\frac{1}{1-\alpha}\max_{2\le k\le n}(1+\rho_{k+1}) (1+t_{k-1}^{\alpha-3})(t_{k-1}^*)^\alpha\tau_k^3\tau_{k-1}^{-\alpha}\\
  &+(\tau_1^\alpha/\alpha+\tau_1)\tau_1^{\alpha/2}+\sqrt{\Gamma(1-\alpha)}\max_{2\le k\le n}(t_k^*)^{\alpha/2}(1+ t_{k-1}^{\alpha-2})\tau_k^2\bigg).
\end{aligned}
\end{equation}
The proof is completed.
\end{proof}
In the case of graded mesh with grading parameter $r$,
\begin{equation}\label{eq:tauj}
\begin{aligned}
    t_j  = \left(\frac j K\right)^r T, \quad\tau_j  = t_j -t_{j-1} = \left[\left(\frac j K\right)^r-\left(\frac {j-1} K\right)^r\right]  T,
\end{aligned}
\end{equation}
where $K$ is the total time step number, $1\leq j\leq K,~t_K = T$.
As a consequence, the two terms after $\max$ operations in \eqref{eq:maxe} can be estimated as follows:
\begin{equation}\label{eq:order_graded1}
\begin{aligned}
&(1+\rho_{k+1}) (1+t_{k-1}^{\alpha-3})(t_{k-1}^*)^\alpha\tau_k^3\tau_{k-1}^{-\alpha} 
\leq C  t_{k-1}^{2\alpha-3}\tau_k^{3-\alpha}\\
&= C t_{k-1}^{2\alpha-3}(t_k-t_{k-1})^{3-\alpha}= C(t_{k-1})^\alpha (t_k/t_{k-1}-1)^{3-\alpha}\\
    &=Ct_{k-1}^\alpha ((1+1/(k-1))^r-1)^{3-\alpha}\\
    &\leq C\, r^{3-\alpha}T^\alpha \frac {(k-1)^{r\alpha-(3-\alpha)}}{K^{r\alpha}} 
    =  \frac{C_{T,1}}{K^{\min\{r\alpha,3-\alpha\}}} 
\end{aligned}
\end{equation}
and
\begin{equation}\label{eq:order_graded2}
\begin{aligned}
& (t_k^*)^{\alpha/2}(1+ t_{k-1}^{\alpha-2})\tau_k^2 \leq C
    t_{k-1}^{\alpha-2}\tau_k^2=  C t_{k-1}^{\alpha-2}(t_k-t_{k-1})^2
    = C t_{k-1}^\alpha(t_k/t_{k-1}-1)^2\\
    &=C T^\alpha \left(\frac{k-1}{K}\right)^{r\alpha}((1+1/(k-1))^r-1)^2
    \leq C\, r^{2}T^\alpha \frac {(k-1)^{r\alpha-2}}{K^{r\alpha}} 
    =  \frac{C_{T,2}}{K^{\min\{r\alpha,2\}}}.
\end{aligned}
\end{equation}
In \eqref{eq:order_graded1} and \eqref{eq:order_graded2}, $C_{T,1}$ and $C_{T,2}$ only depend on $T$.
Therefore, if $u$ satisfies the regularity assumptions in Theorem~\ref{thm:err}, then we have the following error estimate of numerical solutions of the L2-1$_\sigma$ scheme on the graded mesh with grading parameter $r$:
\begin{equation}\label{eq:error_graded}
   \max_{1\leq k\leq K} \|u(t_k)-u^k\|_{L^2(\Omega)}\le \frac{\tilde C}{K^{\min\{r\alpha,2\}}}.
\end{equation}
where $\tilde C$ depends on $C_m$ with $m=1,2,3$, $\alpha$ and $\Omega$.
\begin{remark}
When $\alpha\rightarrow 1^{-}$, the constant $\tilde C$ in \eqref{eq:error_graded} will tend to infinity. 
However, using the technique by Chen-Stynes in~\cite{chen2021blow}, one can obtain $\alpha$-robust error estimate in the sense that $\tilde C$ won't tend to infinity when $\alpha \rightarrow 1^{-}$.
\end{remark}
\section{Numerical tests}\label{sect5}
In this section, we provide some numerical tests on the L2-1$_\sigma$ scheme \eqref{eq:sch_sub} of the subdiffusion equation \eqref{eq:subdiffusion}. 

As in~\cite{liao2018second,chen2019error}, the discrete coefficients $a_j^k$ and $c_j^k$ in \eqref{eq:aj} are computed by adaptive Gauss–Kronrod quadrature, to avoid roundoff error problems.

\subsection{1D example}
We first test the convergence rate of an 1D example, where $\Omega=[0,2\pi]$, $T=1$, $u^0(x)\equiv 0$, and $f(t,x)=\left(\Gamma(1+\alpha)+ t^\alpha\right)\sin(x)$.
It can be checked that the exact solution is $u(t,x)=t^\alpha\sin(x)$. 

The graded mesh \eqref{eq:tauj} with grading parameter $r$ and time step number $K$ is adopted in time. 
We use the central finite difference method in space with grid spacing $h=2\pi/10000$. 
The  maximum $L_2$-error is computed by $\max_{1\leq k\leq K} \|u(t_k)-u^k\|_{L^2(\Omega)}$.
Table~\ref{tab1}--\ref{tab3} present the  maximum $L_2$-errors for $\alpha=0.3,\ 0.5,\ 0.7$ and $r = 1,\ 2,\ 2/\alpha,\ 3/\alpha$ respectively. 
It can be observed that the convergence rates are consistent with  \eqref{eq:error_graded} derived from Theorem~\ref{thm:err}.
\begin{table}[!]
\renewcommand\arraystretch{1.1}
\begin{center}
\def\temptablewidth{0.95\textwidth}
\caption{$\max_{1\leq k\leq K} \|u(t_k)-u^k\|_{L^2(\Omega)}$ for the graded meshes with different grading parameters and time step numbers where $\alpha=0.3$.}\label{tab1}
{\rule{\temptablewidth}{1pt}}
\begin{tabular*}{\temptablewidth}{@{\extracolsep{\fill}}ccccccc}
   &$K=40$&$K=80 $ & $K=160 $&$K=320$&$K=480$&$K=640$\\\hline
$r=1$  &2.3600e-2&	2.2505e-2&	2.0661e-2&1.8461e-2&	1.7117e-2&	1.6165e-2\\
order& -- &0.0685   &0.1233   &0.1625   &0.1863   &0.1988 \\\hline 
$r=2$&1.3254e-2&	9.4767e-3&	6.5872e-3&	4.4967e-3&	3.5761e-3&	3.0338e-3\\
order &-- &0.4841   &0.5247   &0.5508   &0.5650&0.5716
\\
\hline 
$r=2/\alpha$&2.7182e-4&	7.4873e-5&	1.9983e-5&	5.2316e-6&	2.3816e-6&	1.3655e-6\\
order & -- &1.8601   &1.9056   &1.9335   &1.9408   &1.9334
\\
\hline 
$r=3/\alpha$&5.6542e-4&	1.5847e-4&	4.2808e-5&	1.1281e-5&	5.1370e-6&	2.9371e-6\\
order &-- &1.8351   &1.8883   &1.9239   &1.9403  &1.9432
\\
 \end{tabular*}
{\rule{\temptablewidth}{1pt}}
\end{center}
\end{table}
\begin{table}[!]
\renewcommand\arraystretch{1.1}
\begin{center}
\def\temptablewidth{0.95\textwidth}
\caption{$\max_{1\leq k\leq K} \|u(t_k)-u^k\|_{L^2(\Omega)}$ for the graded meshes with different grading parameters and time step numbers where $\alpha=0.5$.}\label{tab2}
{\rule{\temptablewidth}{1pt}}
\begin{tabular*}{\temptablewidth}{@{\extracolsep{\fill}}ccccccc}
   &$K=40$&$K=80 $ & $K=160 $&$K=320$&$K=480$&$K=640$\\\hline
$r=1$  &1.8575e-2&	1.4568e-2&	1.1059e-2&	8.2145e-3&6.8534e-3&	6.0116e-3 \\
order& -- &0.3506   &0.3976   &0.4290   &0.4468   &0.4555 \\\hline 
$r=2$&3.9186e-3 &2.0105e-3&	1.0182e-3&	5.1239e-4&	3.4232e-4&	2.5701e-4\\
order &-- &0.9628   &0.9815   &0.9908   &0.9947   &0.9963
\\
\hline 
$r=2/\alpha$&2.2728e-4 &5.8725e-5&	1.4830e-5&	3.7186e-6&	1.6536e-06&	9.3037e-7\\
order & -- &1.9524   &1.9854   &1.9957   &1.9986   &1.9993
\\
\hline 
$r=3/\alpha$&3.5987e-4 & 9.9080e-5	&2.6590e-5&	7.0116e-6&	3.2025e-6&	1.8379e-6\\
order &-- &1.8608   &1.8977   &1.9231   &1.9327   &1.9302
\\
 \end{tabular*}
{\rule{\temptablewidth}{1pt}}
\end{center}
\end{table}
\begin{table}[!]
\renewcommand\arraystretch{1.1}
\begin{center}
\def\temptablewidth{0.95\textwidth}
\caption{$\max_{1\leq k\leq K} \|u(t_k)-u^k\|_{L^2(\Omega)}$ for the graded meshes with different grading parameters and time step numbers where $\alpha=0.7$.}\label{tab3}
{\rule{\temptablewidth}{1pt}}
\begin{tabular*}{\temptablewidth}{@{\extracolsep{\fill}}ccccccc}
   &$K=40$&$K=80 $ & $K=160 $&$K=320$&$K=480$&$K=640$\\\hline
$r=1$  &8.3068e-3&	5.4221e-3&	3.4582e-3&	2.1753e-3&	1.6518e-3&	1.3569e-3 \\
order& -- &0.6154   &0.6488   &0.6688   &0.6790   &0.6836 \\\hline 
$r=2$&7.3797e-4&2.8495e-4&	1.0874e-4&	4.1317e-5&	2.3437e-5&	1.5672e-5\\
order &-- &1.3729   &1.3898   &1.3961   &1.3983   &1.3989
\\
\hline 
$r=2/\alpha$&1.7758e-4&	4.6703e-5&	1.1903e-5&	2.9940e-6&	1.3323e-6&	7.4975e-7\\
order & -- &1.9269   &1.9721   &1.9913   &1.9970   &1.9985
\\
\hline 
$r=3/\alpha$&1.5861e-4&	4.3872e-5&	1.1918e-5&	3.1981e-6&	1.4809e-6&	8.6093e-7\\
order &-- &1.8541   &1.8802   &1.8978   &1.8987   &1.8855
\\
 \end{tabular*}
{\rule{\temptablewidth}{1pt}}
\end{center}
\end{table}

In~\cite{stynes2017error,kopteva2019error}, the authors state that the large value of $r$ in the graded mesh  increases the temporal mesh width near the final time $t = T$ which can lead to large errors. 
Indeed, when $r= 3/\alpha$, the errors seem larger than the case of $r=2/\alpha$, as observed in Table~\ref{tab1}--\ref{tab3}. 
We then propose to use the graded mesh with varying grading parameter $r_j$ (dependent on the time), called $r$-variable graded mesh.
In particular, for this example, we use the following $r$-variable graded mesh
\begin{equation}\label{eq:tauradaptive}
\begin{aligned}
  r_j &=2/\alpha+1.5-\frac{3(j-1)}{K-1},\\
    t_j & = \left(\frac j K\right)^{r_j} T, \quad
    \tau_j = t_j -t_{j-1} = \left[\left(\frac j K\right)^{r_j}-\left(\frac {j-1} K\right)^{r_{j-1}}\right] T.
\end{aligned}
\end{equation}
In Figure~\ref{fig:pterror}, we compare the time steps, the pointwise $L^2$-errors, and the maximum $L^2$-errors of the $r$-variable graded mesh \eqref{eq:tauradaptive} and the standard graded meshes \eqref{eq:tauj} with $r=2/\alpha,~3/\alpha$.
Here we set $\alpha=0.7 $ and for the left and middle subfigures $K=640$.
From the middle of Figure~\ref{fig:pterror}, the maximum $L^2$-error for the $r$-variable graded mesh is smaller than the standard graded meshes with $r =2/\alpha,~3/\alpha$. 
\begin{figure}[!ht]    
\centering
    \includegraphics[trim={3in 0 2in 0},clip,width=1.0\textwidth]{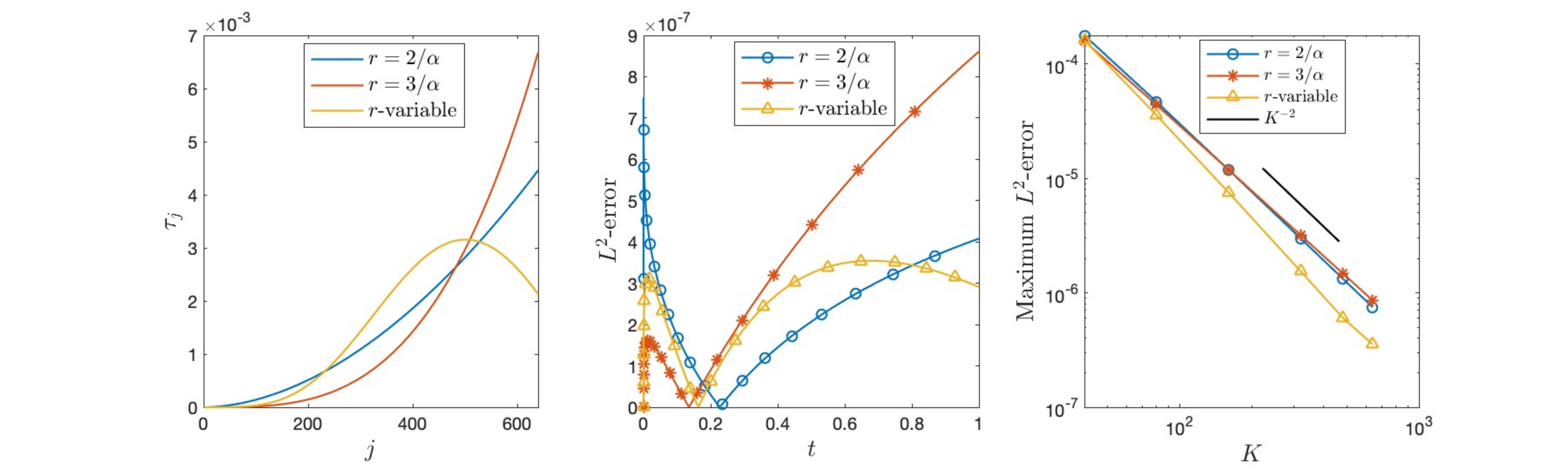}
    \vspace{-0.2in}
    \caption{Time steps (left), pointwise $L^2$-errors (middle), and maximum $L^2$-errors (right) of the L2-1$_\sigma$ scheme in 1D on the $r$-variable graded mesh \eqref{eq:tauradaptive} and the graded meshes \eqref{eq:tauj} with $r=2/\alpha,\ 3/\alpha$ ($\alpha =0.7$). }
    \label{fig:pterror}
\end{figure}

\subsection{2D example}
In the 2D case, we set $f(t,x)=\left(\Gamma(1+\alpha)+2 t^\alpha\right)\sin(x)\sin(y)$ and then the exact solution $u(t,x)=t^\alpha\sin(x)\sin(y)$. In this example, we set periodic boundary condition for the subdiffusion equation.
We take $T=1$ and $\alpha=0.7$. 
Here we use Fourier spectral method in the domain $\Omega=[0,2\pi]^2$ with $256\times 256$ Fourier modes. 
In Figure~\ref{fig:error2d}, we show the pointwise $L^2$-errors (with $K=640$) and the maximum $L^2$-errors of the L2-1$_\sigma$ schemes on the standard graded meshes \eqref{eq:tauj} with $r=2/\alpha$ and the $r$-variable graded mesh \eqref{eq:tauradaptive}. 
One can observe that the $r$-variable graded mesh performs better than the graded mesh for this example.

\begin{figure}
    \centering
    \includegraphics[width=1\textwidth]{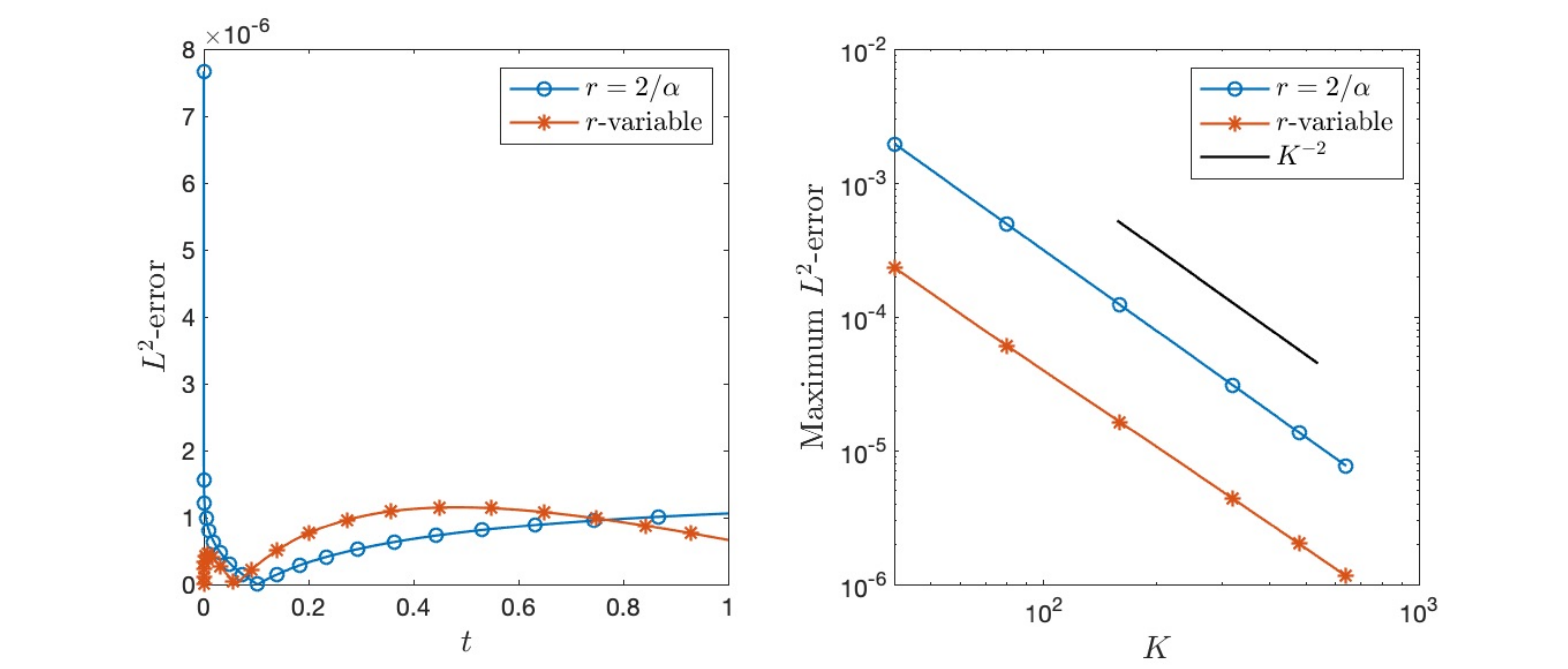}
    \caption{Pointwise $L^2$-errors (left) with $K=640$ and maximum $L^2$-errors (right) of L2-1$_\sigma$ scheme in 2D on the $r$-variable graded mesh \eqref{eq:tauradaptive} and the graded mesh \eqref{eq:tauj} with $r=2/\alpha$ ($\alpha =0.7$). }
    \label{fig:error2d}
\end{figure}

\section*{Declarations}

{\bf Conflicts of interest} The authors declared that they have no conflicts of interest to this work.


\bibliographystyle{spmpsci}
\bibliography{bibfile}


\end{document}